\newtheorem{theorem}{Theorem}[section]
\newtheorem{proposition}[theorem]{Proposition}
\newtheorem{remark}[theorem]{Remark}
\newtheorem{lemma}[theorem]{Lemma}
\newcommand{\N}[1][]{\ensuremath{{\mathbb{N}^{#1}} }}
\newcommand{\Z}[1][]{\ensuremath{{\mathbb{Z}^{#1}} }}
\newcommand{\R}[1][]{\ensuremath{{\mathbb{R}^{#1}} }}
\newcommand{\re}{\mathbb{R}}
\newcommand{\E}{\mathcal{E}}
\title[A degenerated Zakharov system]{A remark on the well-posedness of the degenerated Zakharov system}
\author[V.  Barros and F. Linares]{Vanessa Barros and Felipe Linares}
\address{Universidade Federal da Bahia, Instituto de matem\'atica, Av. Adhemar de Barros , Ondina, 40170-110, Salvador,
Bahia, Brazil}
\email{vbarros@impa.br}
\address{IMPA, Estrada Dona Castorina 110,  Rio de Janeiro 22460-320, Brazil}
\email{linares@impa.br}
\dedicatory{Dedicated to Gustavo Ponce for his 60$^{\,th}$}
\keywords{Zakharov systems, local well-posedness}
\subjclass[2000]{35D05, 35E15, 35Q35}
\begin{document}

\maketitle

\begin{abstract} We extend the local well-posedness theory for the Cauchy problem associated to a degenerated Zakharov system.
The new main ingredients are the derivation of  Strichartz and maximal function norm estimates  for the linear solution of
a Schr\"odinger type equation with missing dispersion in one direction. The result here improves the one in  \cite{LiPoS}.
\end{abstract}

\section{Introduction}
We consider the initial value problem  associated to the degenerate Zakharov system

\begin{equation}\label{eqzd}
\begin{cases}
i (\partial_tE + \partial_zE )+ \Delta_\perp E  = nE,\quad  (x, y, z) \in \re^3,\ t > 0,\\
 \partial^2_{t}n-\Delta_\perp n  =  \Delta_\perp (|E|^2),\\
E(\cdot, 0)= E_0(\cdot),\;\; n(\cdot, 0)=n_0(\cdot), \;\; \partial_t n(\cdot, 0)=n_1(\cdot),
\end{cases}
\end{equation}
where  $\Delta_\perp=\partial^2_x + \partial^2_y$, $E$ is a complex-valued function, and $ n$ is a real-valued function.
The system \eqref{eqzd}  describes the laser propagation when the paraxial approximation is used and the effect of the
group velocity is negligible (\cite{R}). 

We use the term degenerate in the sense that there is no dispersion in the $z$-direction for the system in \eqref{eqzd}  
in contrast to
the well known  Zakharov system
\begin{equation}\label{eqz}
\begin{cases}
i \partial_t E + \Delta E =  nE,\quad  (x, y, z) \in \re^3,\ t > 0,\\
 \partial_t^2 n-\Delta n   = \Delta(|E|^2),\\
\end{cases}
\end{equation}
which was introduced in \cite{Z} to describe the long wave Langmuir turbulence in a plasma.

Regarding the IVP \eqref{eqzd}, Colin and Colin in \cite{CC} posed the question of the well-posedness. 
A positive answer was given by  Linares, Ponce and Saut in \cite{LiPoS},  showing the local well-posedness of
the IVP  \eqref{eqzd} in a suitable Sobolev space. The results proved in \cite{LiPoS} extended previous
ones for the Zakharov  system \eqref{eqz}, where transversal dispersion is taken into account (see \cite{OT}, \cite{GTV} and references
therein). However,  the system \eqref{eqzd} is quite different from the classical  Zakharov system \eqref{eqz} since the Cauchy problem
 for the periodic data exhibits strong instabilities of the Hadamard type implying ill-posedness (see \cite{CM}).

Our goal here is to extend the  local well-posedness  for the IVP \eqref{eqzd} to a larger functional space than that in \cite{LiPoS}.

Before describing our main result and the new ingredients used in its proof we proceed as in \cite{LiPoS} to study this problem.

First the IVP \eqref{eqzd} is reduced into the IVP associated to a single equation, that is,
\begin{equation}\label{eqzd1}
\begin{cases}
i (\partial_tE + \partial_zE )+ \Delta_\perp E  =  nE, \quad (x, y, z) \in \re^3 ,\,t > 0,\\
E(x, y, z, 0)= E_0(x, y, z),
\end{cases}
\end{equation}
where
\begin{equation*}
n(t)=N'(t)n_0+N(t)n_1+\int_0^t N(t-t') \Delta_\perp (|E(t')|^2)dt',
\end{equation*}
with
\begin{equation}\label{defN}
 N(t)f=(-\Delta_\perp)^{-1/2}\sin((-\Delta_\perp)^{1/2}t)f,
\end{equation}
and
\begin{equation}\label{defN'}
 N'(t)f=\cos((-\Delta_\perp)^{1/2}t)f,
\end{equation}
where  $(-\Delta_\perp)^{1/2}f=((\xi_1^2+\xi_2^2)^{1/2}\widehat{f})^{\vee}$.\\

Then it is  considered  the integral equivalent formulation of the IVP \eqref{eqzd1}, that is, 
\begin{equation}\label{eqint1}
\begin{split}
 E(t)=&\E(t)E_0 +\int_{0}^{t}\E(t-t')(N'(t')n_0+N(t')n_1)E(t')dt'\\
&+\int_{0}^{t}\E(t-t')\big(\int_{0}^{t'}N(t'-s)\Delta_\perp(|E(s)|^2)\,ds\big)E(t')\,dt',
\end{split}
\end{equation}
where $ \E(t)$ denotes the unitary group associated to the linear problem to \eqref{eqzd1} given by
\begin{equation}\label{slei}
 \E(t)E_0=\big(e^{-it(\xi_1^2+\xi_2^2+\xi_3)}\widehat{E_0}(\xi_1, \xi_2, \xi_3)\big)^{\vee}.
\end{equation}

A smoothing effect for the unitary group $\E(t)$ similar to the one obtained for 
solutions of the linear  Schr\"odinger equation  was proved in \cite{LiPoS} (see  Proposition \ref{p1} below). This was the main tool used
there to establish local well-posedness  via contraction principle  in the following functional space
\begin{equation}\label{defi1}
 \widetilde{H}^{2j+1}(\re^3)=\{f \in H^{2j+1}(\re^3),D_x^{1/2}{\partial}^{\alpha} f, \ D_y^{1/2}{\partial}^{\alpha}
 f \in L^2(\re^3),\ |\alpha|\leq 2j+1,\ j\in \N \},
 \end{equation}
where $\alpha\in (\mathbb Z^{+})^3$ is a multiindex, $D^{1/2}_xf=(|\xi_1|^{1/2}\hat{f})^\vee$ and $D_y^{1/2}f=(|\xi_2|^{1/2}\hat{f})^\vee$.

Roughly the result in \cite{LiPoS} guarantees the local well-posedness in  $\widetilde{H}^{2j+1}(\re^3)$, $j\ge 2$, for data $E_0\in \widetilde{H}^{2j+1}(\re^3)$,
$n_0\in H^{2j}(\R^3)$ and $n_1\in H^{2j-1}(\R^3)$ with $\partial_z n_1\in H^{2j-1}(\R^3)$, where $H^s(\R^3)$ is the usual Sobolev space.

To improve the previous result obtained in \cite{LiPoS} we derive  two new estimates for solutions of the linear problem. The first one is the following  Strichartz estimate,
\begin{align}
 \|\E(t)f\|_{L^q_tL^p_{xy}L^2_z}\leq c\|f\|_{L^2_{xyz}},
\end{align}
where $2/q=1-2/p$, $2\le p<\infty$.

We can observe that the lack of dispersion in the $z$-direction is reflected in the estimate above. The proof uses the explicit Fourier transform of $e^{itx^2}$
and the usual method to prove Strichartz estimates for the linear Schr\"odinger equation.

The second new estimate for solutions of the linear problem is the following maximal function estimate
\begin{equation}\label{desig2}
 \|\E(t)f\|_{L^2_xL^{\infty}_{yzT}}\leq c(T, s)\|f\|_{H^s(\re^3)},\;\; s>3/2.
\end{equation}
The argument to prove \eqref{desig2} follows the ideas in \cite{KZ}, where they obtained a $L^4_x$-maximal function estimates for solutions of the linear
 problem associated to the modified Kadomtsev-Petviashvili (KPI) equation.
 
 \begin{remark} It is not clear whether the estimate \eqref{desig2} is sharp. In Proposition \ref{propositionB} below we show that this estimate is false
 in $H^s(\R^3)$  for $s<1$.
 \end{remark}

To state our result we shall slightly modify the space $\widetilde{H}^{2j+1}(\re^3)$ defined in \eqref{defi1}. We define
\begin{equation}\label{newH}
 \widetilde{H}^2(\re^3)=\{f \in H^2(\re^3),D_x^{1/2}{\partial}^{\alpha} f, \ D_y^{1/2}{\partial}^{\alpha}
 f \in L^2(\re^3),\ |\alpha|= 2\}.
 \end{equation}

With this notation, the main result here reads as:

 \begin{theorem}\label{T2}
 For initial data $(E_0, n_0, n_1)$ in $\widetilde{H}^{2}(\re^3)\times H^{2}(\re^3)\times
 H^{1}(\re^3) \text{ and } \partial_zn_1
 \in
 H^{1}(\re^3),$ there exist
 $T>0$ and a unique solution $E$ of the integral equation \eqref{eqint1} such that 
\begin{equation}
E \in C([0, T]:\widetilde{H}^{2}(\re^3)),\label{ww1}
\end{equation}
\begin{equation}\label{ww2}
 \sum_{|\alpha|=2}\big( \|\partial_x{\partial}^{\alpha}E \|_{L^{\infty}_xL^2_{yzT}}+ 
 \|\partial_y{\partial}^{\alpha}E\|_{L^{\infty}_yL^2_{xzT}}\big)<\infty,
\end{equation}
\begin{equation}\label{ww3}
\|E \|_{L^{2}_xL^{\infty}_{yzT}} + \|E \|_{L^{2}_yL^{\infty}_{xzT}}<\infty,
\end{equation}
and 
\begin{equation}\label{ww4}
X_T(E)<\infty
\end{equation}
where $X_T(\cdot)$ is defined in \eqref{strichartznorms} below.

Moreover, there exists a neighborhood $V$ of $(E_0, n_0, n_1)\in 
\widetilde{H}^{2}(\re^3)\times H^{2}(\re^3) \times H^{1}(\re^3)$ 
such that the map  $\mathcal{F}:(E_0, n_0, n_1)\mapsto E(t)$ from $V$
into the class defined by \eqref{ww1}-\eqref{ww4} is smooth.

One also has that
\begin{equation*}
 n \in C([0, T]:H^{2}(\re^3)).
\end{equation*}
\end{theorem}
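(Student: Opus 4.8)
The plan is to solve the integral equation \eqref{eqint1} by the contraction mapping principle in a complete space built from the four quantities in \eqref{ww1}--\eqref{ww4}. Concretely, I would let $\mathcal{X}_T$ be the space of functions $E$ on $\re^3\times[0,T]$ with finite norm
\begin{equation*}
\|E\|_{\mathcal{X}_T}=\|E\|_{L^\infty_T\widetilde{H}^2}+\sum_{|\alpha|=2}\big(\|\partial_x\partial^\alpha E\|_{L^\infty_x L^2_{yzT}}+\|\partial_y\partial^\alpha E\|_{L^\infty_y L^2_{xzT}}\big)+\|E\|_{L^2_x L^\infty_{yzT}}+\|E\|_{L^2_y L^\infty_{xzT}}+X_T(E),
\end{equation*}
and work on the closed ball $B=\{E\in\mathcal{X}_T:\|E\|_{\mathcal{X}_T}\le a\}$ for a radius $a$ fixed below in terms of the data. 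Writing $\Phi(E)$ for the right-hand side of \eqref{eqint1}, the whole argument reduces to establishing, for some $\theta>0$ and polynomials $P,Q$, the two inequalities
\begin{equation*}
\|\Phi(E)\|_{\mathcal{X}_T}\le c_0\big(\|E_0\|_{\widetilde{H}^2}+\|n_0\|_{H^2}+\|n_1\|_{H^1}+\|\partial_z n_1\|_{H^1}\big)+c\,T^{\theta}\,P\big(\|E\|_{\mathcal{X}_T}\big)
\end{equation*}
and
\begin{equation*}
\|\Phi(E_1)-\Phi(E_2)\|_{\mathcal{X}_T}\le c\,T^{\theta}\,Q\big(\|E_1\|_{\mathcal{X}_T},\|E_2\|_{\mathcal{X}_T}\big)\,\|E_1-E_2\|_{\mathcal{X}_T},
\end{equation*}
so that $\Phi$ maps $B$ into itself and is a contraction once $a$ is fixed and $T$ is small.

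First I would dispose of the free term $\E(t)E_0$. Since $\E(t)$ is unitary on every $H^s$, its $L^\infty_T\widetilde{H}^2$ norm equals $\|E_0\|_{\widetilde{H}^2}$; the smoothing norms in \eqref{ww2} are controlled by $\|E_0\|_{H^2}$ through Proposition \ref{p1} applied to $g=\partial^\alpha E_0$ (the gain of one $x$- or $y$-derivative exactly absorbs the extra $\partial_x,\partial_y$); the maximal-function norms in \eqref{ww3} are bounded by $c\|E_0\|_{H^s}$ with $3/2<s<2$ by the new estimate \eqref{desig2}; and $X_T(\E(t)E_0)$ is bounded by the Strichartz estimate. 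Hence $\|\E(t)E_0\|_{\mathcal{X}_T}\le c_0\|E_0\|_{\widetilde{H}^2}$, and the choice of the low-regularity data space \eqref{newH} is dictated precisely by these four requirements.

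The core of the work is the estimate of the two Duhamel terms, for which I would use the inhomogeneous (retarded) forms of the four linear estimates, reducing each to an $L^1$ (or fractional $L^{q'}$) integration in time of the spatial norm of the integrand. The first integral term is linear in $E$ with coefficient $N'(t')n_0+N(t')n_1$; using \eqref{defN} and \eqref{defN'} together with the energy identities for the transverse wave group, this coefficient is controlled in the relevant Sobolev norms by $\|n_0\|_{H^2}+\|n_1\|_{H^1}+\|\partial_z n_1\|_{H^1}$, and its product with $E$ is distributed by Leibniz and H\"older between the maximal-function norms \eqref{ww3} and the energy norm \eqref{ww1}. The genuinely delicate term is the cubic one, where the inner operator must be unwound via \eqref{defN}: since $N(t)\Delta_\perp=-(-\Delta_\perp)^{1/2}\sin((-\Delta_\perp)^{1/2}t)$, applying it to $|E|^2$ costs half a transverse derivative of a quadratic expression. \textbf{This half-derivative loss is the main obstacle}, and it is exactly what the smoothing norms \eqref{ww2} are designed to recover: the top derivatives are placed on the factor measured in $L^\infty_x L^2_{yzT}$ (respectively $L^\infty_y L^2_{xzT}$), while the companion factors are placed in the maximal-function norms \eqref{ww3}, so that the derivative produced by $N\Delta_\perp$ is absorbed without exceeding the $\widetilde{H}^2$ budget fixed by \eqref{newH}. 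Because each nonlinear contribution carries at least a positive power $T^{\theta}$ coming from the time integration, these bounds supply the smallness needed to close the contraction.

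With these two inequalities established, the Banach fixed point theorem produces a unique $E$ in the ball $B$ solving \eqref{eqint1}, and a standard continuity-in-time argument promotes this to \eqref{ww1}. The smoothness of the solution map $\mathcal{F}:(E_0,n_0,n_1)\mapsto E$ on a neighborhood $V$ follows because $\Phi$ depends polynomially, hence real-analytically, on $(E_0,n_0,n_1)$ and on $E$, so that the implicit function theorem applies to the fixed-point equation $E=\Phi(E)$. Finally, to recover $n\in C([0,T]:H^2)$ I would return to $n(t)=N'(t)n_0+N(t)n_1+\int_0^t N(t-s)\Delta_\perp(|E(s)|^2)\,ds$: the first two terms lie in $C([0,T]:H^2)$ by the boundedness of the transverse wave group together with the hypotheses $n_0\in H^2$, $n_1\in H^1$, $\partial_z n_1\in H^1$ (the $z$-condition compensating the absence of transverse smoothing in that variable), while for the integral term the identity $N(t-s)\Delta_\perp=-(-\Delta_\perp)^{1/2}\sin((-\Delta_\perp)^{1/2}(t-s))$ shows that only half a transverse derivative of $|E|^2$ is needed, which is finite because $E\in C([0,T]:\widetilde{H}^2)$ and the extra $D_x^{1/2},D_y^{1/2}$ components built into \eqref{newH} keep $(-\Delta_\perp)^{1/2}|E|^2$ in $H^2$.
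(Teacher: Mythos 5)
Your proposal follows essentially the same route as the paper: a contraction in exactly the norm $\vvvert\cdot\vvvert$ of Section \ref{proofoftheoremT2} (energy, half-derivative, Strichartz, maximal-function and local-smoothing components), with the free term handled by unitarity together with Propositions \ref{p1}, \ref{propositionA} and \ref{stric}, and the cubic Duhamel term closed by pairing the smoothing norms \eqref{ww2} with the maximal-function norms \eqref{ww3} to absorb the derivative produced by $N(t)\Delta_\perp$ --- which is precisely the content of Lemmas \ref{lemmaH1}--\ref{lemmaH2}. One small correction: $(-\Delta_\perp)^{1/2}$ is a full derivative, not half of one, so in your final paragraph the membership $(-\Delta_\perp)^{1/2}\partial^{\alpha}(|E|^2)\in L^2$ for $|\alpha|\le 2$ requires three derivatives on one factor of $E$ and does \emph{not} follow from $E\in C([0,T]:\widetilde{H}^{2})$ alone; it is recovered, as in Lemma \ref{lemmaH1}, from the very pairing of \eqref{ww2} with \eqref{ww3} that you already invoked when closing the contraction.
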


\begin{remark}   Observe that in comparison with the result in \cite{LiPoS}  we could considerably weaken the regularity required
to prove local well-posedness for the IVP \eqref{eqzd}.  Notice also that it would be possible to lower  the regularity a little further because 
the maximal function works well in $H^s(\R^3)$, $s>3/2$. 
\end{remark}

\begin{remark} The Strichartz estimates were essential in our analysis. It may be possible to use them in the Bourgain spaces
framework to obtain better results (see for instance \cite{bhht}, \cite{GTV},  and references therein for the Zakharov system). Regarding global well-posedness, we do 
not know any conserved quantity that might be useful to extend globally
the local results.
\end{remark}

The plan of the paper is the following. In Section \ref{linearestimates} we prove the new linear estimates commented above
and recall some known ones established in \cite{LiPoS}. Some useful lemmas will also be presented in this section.  
In Section \ref{nonlinearestimates}  we establish  estimates involving the nonlinear term that allow us to 
simplify the exposition of the 
proof of the main result. Finally our main result will be proved in Section \ref{proofoftheoremT2}.

Before leaving this section we introduce the notation used throughout the paper.  We use standard 
notation in Partial Differential
Equations. In addition we will use $c$  to denote various constants that may change from line to line.

Let $x=(x_1,x_2,x_3)$ and $\xi=(\xi_1,\xi_2,\xi_3)$. For $f=f(x,t)\in \mathcal S(\R^4)$,  $\widehat{f}$ will denote 
its Fourier transform in space, whereas ${\widehat{f}}^{(x_ix_l)}$, respectively $\widehat{f}^{(x_i)}$, will denote its
Fourier transform in the $x_ix_l$ and $x_i$ variables,
$i,l=1,2,3$. For $s\in \R$, we define the Bessel and Riesz potentials of order $-s$, $J^s_x$ and $D^s_x$, by
\begin{equation}\label{bessel}
\widehat{J^s_xf}=
(1+|\xi|^2)^{s/2}\widehat{f}    \text{\hskip10pt and \hskip10pt}     \widehat{D^s_xf}=|\xi|^s\widehat{f}.
\end{equation}
We also use the notation $J^s_{x_ix_l}$ and $J^s_{x_i}$ to denote the
operators 
\begin{equation}\label{besselpart}
\widehat{J^s_{x_ix_l}f}=(1+|(\xi_i,\xi_l)|^2)^{s/2}\widehat{f} \text{\hskip10pt and \hskip10pt}
\widehat{J^s_{x_i}f}=(1+|\xi_i|^2)^{s/2}\widehat{f}, \quad i,l=1,2,3.
\end{equation}

We introduce the next notation to set together all the terms involving the Strichartz norms in our analysis.
\begin{equation}\label{strichartznorms}
\begin{split}
X_T(f):&= \underset{|\alpha|=1}{\sum} \big(\|J^{1/4+}_zD^{1/2}_x\partial^{\alpha}f\|_{L^4_{xyT}L^2_{z}}
+\|J_z^{3/8+}\partial^{\alpha}f\|_{L^{8/3}_TL^8_{xy}L^2_{z}}+\|J_z^{1/2+}\partial^{\alpha}f \|_{L^{4}_{xyT}L^2_{z}}\big)\\
&+ \sum_{|\alpha|\leq1}\big(\|\partial_{x}\partial^\alpha f \|_{L^{4}_{xyT}L^2_{z}} + 
\|\partial_{y}\partial^\alpha f \|_{L^{4}_{xyT}L^2_{z}}\big).
\end{split}
\end{equation}

\section{Linear estimates}\label{linearestimates}

Consider the linear problem:
\begin{equation}\label{eqlinear}
\left\{\begin{array}{l}
\partial_t E + \partial_z E -i \Delta_\perp E  = 0,\quad \forall\, (x, y, z) \in \re^3,\ t > 0,\\
E(x, y, z, 0)= E_0(x, y, z).
\end{array}
\right.
\end{equation}
 
where $ \Delta_\perp=\partial^2_x + \partial^2_y$.

The solution of the linear IVP \eqref{eqlinear} is given by the unitary group $\E(t):H^s\rightarrow H^s$ such that
\begin{equation}\label{sol}
 E(t)= \E(t)E_0=\left(e^{-it(\xi_1^2+\xi_2^2+\xi_3)}\widehat{E_0}(\xi_1, \xi_2, \xi_3)\right)^{\vee}.
\end{equation}

\begin{proposition}\label{p1}
 The solution of the linear problem \eqref{eqlinear} satisfies
\begin{align}
\|D_x^{1/2}\E(t)f\|_{L^{\infty}_xL^2_{yzT}}\leq c \|f\|_{L^2_{xyz}}, \label{LE1}
\end{align}
\begin{align}
\|D_x^{1/2}\int_0^t\E(t-t')G(t')dt'\|_{L^{\infty}_TL^2_{xyz}}\leq c \|G\|_{L^1_xL^2_{yzT}},\label{LE2} 
\end{align}
and 
\begin{align}
\|\partial_x\int_0^t\E(t-t')G(t')dt'\|_{L^{\infty}_xL^2_{yzT}}\leq c \|G\|_{L^1_xL^2_{yzT}}\label{LE3}.
\end{align}
These estimates hold exchanging x and y. Here $D_x^{1/2}f=(2\pi|\xi_1|^{1/2}\hat{f})^{\vee}$.
\end{proposition}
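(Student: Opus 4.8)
The plan is to reduce all three inequalities to their one-dimensional counterparts for the free Schr\"odinger group in the $x$-variable, exploiting the fact that the \emph{missing dispersion in $z$ is harmless for these estimates}. Factor the symbol as $e^{-it(\xi_1^2+\xi_2^2+\xi_3)}=e^{-it\xi_1^2}\,e^{-it(\xi_2^2+\xi_3)}$ and take the partial Fourier transform in the $(y,z)$ variables. Writing $V(t)$ for the one-dimensional Schr\"odinger group with symbol $e^{-it\xi_1^2}$ acting in $x$, one gets
\[
\widehat{\E(t)f}^{(yz)}(x,\xi_2,\xi_3)=e^{-it(\xi_2^2+\xi_3)}\,V(t)\big[\widehat{f}^{(yz)}(\cdot,\xi_2,\xi_3)\big](x).
\]
The crucial observation is that $e^{-it(\xi_2^2+\xi_3)}$ is unimodular, so it disappears after any $L^2$ norm in the frozen variables and never interferes with the $x$-smoothing; in particular the degeneracy, encoded in the pure phase $e^{-it\xi_3}$, plays no role.

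For \eqref{LE1} I would freeze $(\xi_2,\xi_3)$ and invoke the classical one-dimensional local smoothing estimate $\|D_x^{1/2}V(t)g\|_{L^\infty_xL^2_t}\le c\|g\|_{L^2_x}$. Using Plancherel in $(y,z)$ to pass between $f$ and $\widehat f^{(yz)}$, together with the elementary bound $\sup_x\int(\cdots)\,d\xi_2\,d\xi_3\le \int\sup_x(\cdots)\,d\xi_2\,d\xi_3$ to carry the supremum in $x$ through the $(\xi_2,\xi_3)$-integral, one arrives at
\[
\|D_x^{1/2}\E(t)f\|_{L^\infty_xL^2_{yzT}}^2\le c\int \big\|\widehat f^{(yz)}(\cdot,\xi_2,\xi_3)\big\|_{L^2_x}^2\,d\xi_2\,d\xi_3=c\|f\|_{L^2_{xyz}}^2.
\]

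The estimates \eqref{LE2} and \eqref{LE3} follow the same scheme, now starting from the one-dimensional inhomogeneous (dual) smoothing estimates $\big\|D_x^{1/2}\int_0^t V(t-t')g(t')\,dt'\big\|_{L^\infty_tL^2_x}\le c\|g\|_{L^1_xL^2_t}$ and $\big\|\partial_x\int_0^t V(t-t')g(t')\,dt'\big\|_{L^\infty_xL^2_t}\le c\|g\|_{L^1_xL^2_t}$. After the partial Fourier transform the time-dependent phase $e^{it'(\xi_2^2+\xi_3)}$ is absorbed into $G$ without changing its modulus, so the one-dimensional bounds apply for each frozen $(\xi_2,\xi_3)$, giving a factor $\|\widehat G^{(yz)}(\cdot,\xi_2,\xi_3,\cdot)\|_{L^1_xL^2_t}$.

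The only genuinely technical point, and the step I would be most careful about, is the reshuffling of the mixed norms: after applying the one-dimensional bound one is left with the $L^2_{\xi_2\xi_3}$ norm of an $L^1_xL^2_t$ norm, and to match the right-hand side $\|G\|_{L^1_xL^2_{yzT}}$ one must move the $L^2_{\xi_2\xi_3}$ norm inside the outer $L^1_x$. This is precisely Minkowski's integral inequality (valid since $2\ge 1$), after which Plancherel in $(y,z)$ converts $\|\widehat G^{(yz)}\|_{L^2_{\xi_2\xi_3}L^2_t}$ back into $\|G\|_{L^2_{yzT}}$ and closes the estimate. Finally, the symmetry of the symbol in $\xi_1$ and $\xi_2$ makes the versions with $x$ and $y$ exchanged immediate.
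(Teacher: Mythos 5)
Your argument is correct: the factorization of the symbol, the freezing of the $(\xi_2,\xi_3)$ frequencies to reduce to the one-dimensional Kenig--Ponce--Vega smoothing estimates for $V(t)=e^{it\partial_x^2}$, and the Minkowski step needed to exchange the $L^2_{\xi_2\xi_3}$ and $L^1_x$ norms are all sound, and you correctly identified that last exchange as the only delicate point. The paper itself gives no proof of this proposition, deferring to \cite{LiPoS}, where the argument is essentially this same reduction, so your proposal matches the intended route.
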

\begin{proof}
 We refer to \cite{LiPoS} for a proof of this proposition.
\end{proof}

Now we give the precise statement of the inequality \eqref{desig2} and its proof. 

\begin{proposition}\label{propositionA}
 For $s>3/2$, and $T>0$ we have 
\begin{equation}\label{E2}
\|\E(t)E_0\|_{L_x^2L^\infty_{yzT}}\leq c(T,s) \|E_0\|_{H^s(\R^3)}.
\end{equation}
The same estimate holds exchanging x and y.
\end{proposition}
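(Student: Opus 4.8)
The plan is to exploit the complete absence of dispersion in the $z$-direction, which lets us factor the flow and reduce to a purely two-dimensional maximal function estimate. Writing $U(t)=e^{it\Delta_\perp}$ for the free Schr\"odinger group in the $(x,y)$ variables, the symbol $e^{-it(\xi_1^2+\xi_2^2+\xi_3)}$ splits as the product of $e^{-it(\xi_1^2+\xi_2^2)}$ (the symbol of $U(t)$) and $e^{-it\xi_3}$ (a pure translation by $t$ in $z$), so that $\E(t)E_0(x,y,z)=(U(t)E_0)(x,y,z-t)$, where $U(t)$ acts on the first two variables and $z$ is a spectator. Since for each fixed $t$ the map $z\mapsto z-t$ is a bijection of $\re$, the supremum over $z$ and $t$ is insensitive to this translation; hence $\|\E(t)E_0\|_{L^2_xL^\infty_{yzT}}=\|U(t)E_0\|_{L^2_xL^\infty_{ywT}}$, with $w$ (the former $z$) now a passive parameter on which $U(t)$ does not act.

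First I would remove this parameter by Sobolev embedding. Since $\sup_w|h(w)|\lesssim\|J_w^{1/2+}h\|_{L^2_w}$ and $J_w^{1/2+}$ commutes with $U(t)$, moving the $L^2_w$ norm outside the supremum in $(y,t)$ (which only increases the norm) and then applying Tonelli reduces matters, slicewise in $w$ for $g:=J_w^{1/2+}E_0$, to the two-dimensional estimate
\begin{equation}\label{2Dmax}
\|U(t)g\|_{L^2_xL^\infty_{y,\,t\in[0,T]}}\lesssim_T\|g\|_{H^{1+}(\re^2)}.
\end{equation}
Indeed, combining \eqref{2Dmax} slicewise with the half derivative spent on the parameter yields the multiplier bound $(1+\xi_3^2)^{1/2+}(1+\xi_1^2+\xi_2^2)^{1+}\lesssim(1+|\xi|^2)^{3/2+}$, which is exactly the claimed loss $s>3/2$.

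The heart of the matter is therefore \eqref{2Dmax}, and a crude Sobolev embedding in the two variables $(y,t)$ is not enough: controlling $\sup_t$ by $\|J_t^{1/2+}\cdot\|_{L^2_t}$ costs a full derivative in $(x,y)$, since on the characteristic surface the time frequency satisfies $\tau\sim\xi_1^2+\xi_2^2$, and adding the $1/2+$ from $\sup_y$ produces only $H^{3/2}$. To recover the missing half derivative one must use the genuine dispersion in $x$ carried by the $L^2_x$ norm, following the scheme of \cite{KZ}. Concretely, I would perform a Littlewood--Paley decomposition $g=\sum_N g_N$ in the $x$-frequency, dispose of $\sup_y$ by the $1/2+$ Sobolev embedding, and for each dyadic block bound the time-maximal function $\sup_{t\in[0,T]}$ in $L^2_x$ by inserting the explicit oscillatory kernel of the one-dimensional Schr\"odinger group in $x$ and invoking van der Corput / stationary phase estimates; the stationary-phase gain is what lowers the cost of $\sup_t$ by one half derivative. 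Summing the dyadic pieces by the Littlewood--Paley theorem then yields \eqref{2Dmax}.

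The main obstacle is precisely this last step: obtaining the maximal-in-time (Carleson-type) bound in the \emph{global} $L^2_x$ norm with only a half-derivative loss beyond the transverse Sobolev cost. This is the delicate oscillatory part of the argument and the reason for importing the technique of \cite{KZ}; the crude bounds that ignore the $x$-dispersion lose a full derivative and reach only $H^2$. Finally, I expect the resulting argument to be non-optimal — one spends $1/2+$ on the spectator $w$, $1/2+$ on $y$, and gains just a single half derivative in $x$ — which is consistent with the remark that the sharpness of \eqref{E2} is unclear and that the estimate already fails below $H^1$.
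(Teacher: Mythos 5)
Your reductions are correct and even attractive: the factorization $\E(t)E_0(x,y,z)=(e^{it\Delta_\perp}E_0)(x,y,z-t)$, the invariance of $\sup_{z,t}$ under the shift, and the expenditure of $1/2+$ derivatives on the spectator variable via Sobolev embedding and Tonelli are all valid, and they do reduce \eqref{E2} to the two-dimensional bound $\|e^{it\Delta_\perp}g\|_{L^2_xL^\infty_{yT}}\lesssim_T\|g\|_{H^{1+}(\re^2)}$ with the right bookkeeping $1/2+1=3/2$. The problem is that this two-dimensional estimate carries essentially all the content of the proposition, and you do not prove it; you name it as ``the heart of the matter'' and sketch a route. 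Moreover, the route you sketch --- dispose of $\sup_y$ by another $1/2+$ Sobolev embedding and then bound $\sup_{t\le T}$ in the \emph{global} $L^2_x$ norm by a one-dimensional van der Corput argument gaining half a derivative --- runs into a concrete obstruction. After linearizing the maximal function, the $TT^*$ kernel at $x$-frequency $N$ is $K(x,x')=\int e^{i((x-x')\xi_1-(t(x)-t(x'))\xi_1^2)}\psi(\xi_1/N)\,d\xi_1$; in the stationary regime $|x-x'|\lesssim N|t(x)-t(x')|$ a single van der Corput application gives only $|K|\lesssim|t(x)-t(x')|^{-1/2}\lesssim N^{1/2}|x-x'|^{-1/2}$, which is not integrable on $\re$, so Schur's test does not close and the global-in-$x$ time-maximal bound at cost $N^{1/2}$ is not obtained this way. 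This is exactly the threshold case: the traveling wave packet $\hat g=\chi_{B(Ne_1,1)}$ shows that $N^{1/2}$ is also necessary, so there is no slack to absorb the failure.

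The paper's proof avoids this decoupling. It localizes all three frequencies jointly to a dyadic cube (Lemma \ref{lemmaA}) and bounds the full oscillatory kernel by a function $H_{k,T}(|x_1|)\sim 2^{3k}\min(1,c(T)|x_1|^{-2})$ of the $x$-variable alone; the crucial $|x_1|^{-2}$ decay in the stationary regime is obtained by applying van der Corput in \emph{both} $\xi_1$ and $\xi_2$, which yields $|J_1J_2|\lesssim |t|^{-1}\le T|t|^{-2}\lesssim 2^{2k}|x_1|^{-2}$. In other words, the integrability of the kernel in $x_1$ --- which is what the $L^2_x$-outside, $L^\infty_{yzT}$-inside structure requires in the standard linearization--$TT^*$--Schur scheme of \cite{KPV1} --- is purchased precisely with the $y$-oscillation that your plan discards at the outset; the $\sup$ over $y$, $z$ and $t$ is then handled in one stroke via \eqref{L} and \eqref{H}, and the factor $2^{3k}$ (the volume of the frequency cube) is what produces $s>3/2$. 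If you wish to keep your anisotropic reduction (which, incidentally, charges only $2^{k/2+}$ to large $\xi_3$-frequencies and is therefore potentially sharper than the paper's isotropic count), the way to finish is to run the paper's two-variable kernel argument on $e^{it\Delta_\perp}$ in $\re^2$ --- which does give $\|e^{it\Delta_\perp}g\|_{L^2_xL^\infty_{yT}}\lesssim_T\|g\|_{H^{1+}(\re^2)}$ --- rather than to peel off $y$ by Sobolev embedding and attempt a purely one-dimensional global maximal estimate.
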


The proof of Proposition \ref{propositionA} is a direct consequence of the next lemma, as we shall see later.

\begin{lemma}\label{lemmaA}
For every $ T>0$ and $k \geq 0$, there exist a constant $c(T) > 0 $ and a positive function $H_{k,T}(\cdot)$ such that 
\begin{equation}\label{H}
\int_{0}^{+\infty} H_{k, T}(y) d\alpha \leq c(T)2^{3k}, 
\end{equation}
and
\begin{equation}\label{J}
|\int_{\R^3}e^{i(-t(\xi_1^2+\xi_2^2+\xi_3)+x\cdot\xi)}\overset{3}{\underset{i=1}{\prod}}\psi_i(\xi_i)\, d\xi\big| 
\leq H_{k, T}
(|x_1|), 
\end{equation}
for $|t| \leq T$ and $x=(x_1,x_2,x_3)$ and $\xi=(\xi_1,\xi_2,\xi_3)$ in $\re^3 $ where  $\psi_i(\xi_i)=\psi(2^{k+1}-|\xi_i|)$,
and $\psi$ denotes a  $C^{\infty}(\re)$ function such that $\psi=1$ for $x\geq 1$  and  $\psi=0$ for $x\leq0$.
\end{lemma}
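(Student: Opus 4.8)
The plan is to exploit the product structure of both the phase and the cutoff. Writing $\lambda=2^{k+1}$ and $\psi_k(\xi)=\psi(\lambda-|\xi|)$, the phase splits as $-t(\xi_1^2+\xi_2^2+\xi_3)+x\cdot\xi=(x_1\xi_1-t\xi_1^2)+(x_2\xi_2-t\xi_2^2)+(x_3-t)\xi_3$ while the amplitude factors as $\prod_i\psi_k(\xi_i)$, so the integral in \eqref{J} equals a product $I_1(x_1,t)\,I_2(x_2,t)\,I_3(x_3,t)$ of one-dimensional integrals, with
\begin{equation*}
I_j(x_j,t)=\int_{\R}e^{i(x_j\xi-t\xi^2)}\psi_k(\xi)\,d\xi\ \ (j=1,2),\qquad I_3(x_3,t)=\int_{\R}e^{i(x_3-t)\xi}\psi_k(\xi)\,d\xi.
\end{equation*}
For $I_2$ and $I_3$ I would use only the trivial bound $|I_2|,|I_3|\le\|\psi_k\|_{L^1}\le c\,2^{k+1}$, uniform in $x_2,x_3$ and $t$. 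This reduces the lemma to a one-dimensional statement: setting $H_{k,T}(\alpha):=c\,2^{2k}\sup_{|t|\le T}|I_1(\alpha,t)|$ and noting that $I_1$ is even in its first variable, the pointwise domination \eqref{J} is immediate and \eqref{H} follows once one proves $\int_0^\infty\sup_{|t|\le T}|I_1(\alpha,t)|\,d\alpha\le c(T)\,2^{k}$.

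For this one-dimensional estimate I would split $\alpha\ge0$ into three ranges according to the location of the critical point $\xi_c=\alpha/2t$ of $\phi(\xi)=x_1\xi-t\xi^2$ relative to the support $\{|\xi|\le\lambda\}$. On $0\le\alpha\le\lambda^{-1}$ I keep the trivial bound $|I_1|\le c\lambda$, contributing $\int_0^{\lambda^{-1}}c\lambda\,d\alpha=c$. On $\lambda^{-1}<\alpha\le4T\lambda$ I compare $|t|$ with $\alpha/4\lambda$: when $|t|\ge\alpha/4\lambda$, van der Corput's second-derivative estimate (using $|\phi''|=2|t|$ and $\mathrm{Var}(\psi_k)\lesssim1$) gives $|I_1|\le c\,|t|^{-1/2}\le c(\lambda/\alpha)^{1/2}$; when $0<|t|<\alpha/4\lambda$ one has $|\phi'|\ge\alpha-2|t|\lambda\ge\alpha/2$ on the support, so a single integration by parts yields $|I_1|\lesssim\alpha^{-1}+|t|\lambda\alpha^{-2}\lesssim\alpha^{-1}\le(\lambda/\alpha)^{1/2}$ since $\alpha\ge\lambda^{-1}$. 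Hence $\sup_{|t|\le T}|I_1|\le c(\lambda/\alpha)^{1/2}$ here, contributing $\int_{\lambda^{-1}}^{4T\lambda}c(\lambda/\alpha)^{1/2}\,d\alpha\le c\sqrt{T}\,\lambda$. On $\alpha>4T\lambda$ every $|t|\le T$ satisfies $|t|<\alpha/4\lambda$, so $|\phi'|\ge\alpha/2$ and $N$ integrations by parts (the derivatives $\psi_k^{(m)}$ being $O(1)$ and supported on unit-length intervals, with $\phi''\equiv-2t$, $\phi'''\equiv0$) give $|I_1|\lesssim_{N,T}\alpha^{-N}$, with a harmless contribution $\lesssim_{N,T}\lambda^{-(N-1)}$ for $N\ge2$. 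Summing the three ranges gives the claimed $c(T)2^{k}$, and the final constant is of order $\sqrt{T}$.

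The genuinely delicate point is the middle range, and it is where I expect the main obstacle. The second-derivative test only provides the bound $|t|^{-1/2}$, uniform in $\alpha$; taken at face value under $\sup_{|t|\le T}$ this degenerates to the trivial bound $\lambda$ as $t\to0$ and, integrated over an interval of length $\sim T\lambda$, would overshoot the target by a factor $\lambda\sim2^{k}$. The resolution is to localise the worst case of the supremum: for fixed $\alpha$ the van der Corput bound is sharp only when $\xi_c=\alpha/2t$ actually lies in the support, i.e. when $|t|\sim\alpha/\lambda$, which forces $\sup_{|t|\le T}|I_1(\alpha,t)|\lesssim(\lambda/\alpha)^{1/2}$ rather than $\lambda$. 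It is precisely this $\alpha^{-1/2}$ gain, obtained by splitting the $t$-range at the threshold $\alpha/4\lambda$ instead of at a fixed constant, that produces the correct power $2^{k}$. One then checks the remaining routine points: that all estimates are uniform for $t$ of either sign and at $t=0$ (where $I_1=\widehat{\psi_k}$ and only the trivial bound and integration by parts are used), and that $\mathrm{Var}(\psi_k)$, $\|\psi_k^{(N)}\|_{L^1}$ and $\|\psi_k\|_{L^1}/2^{k}$ are bounded independently of $k$.
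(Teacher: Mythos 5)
Your proof is correct, and it rests on the same two pillars as the paper's: the factorization of the oscillatory integral into a product $J_1J_2J_3$ of one-dimensional integrals, with van der Corput in the stationary regime and integration by parts in the non-stationary one. The decomposition, however, is organized differently. The paper cases on the size of $|x_1|$ relative to $2^{3}2^{k}|t|$; in the stationary regime it applies van der Corput to \emph{both} $J_1$ and $J_2$ to get $|J_1J_2|\lesssim t^{-1}$, and then converts this $t$-decay into $|x_1|^{-2}$ decay through the constraint $|x_1|\le 2^{3}2^{k}|t|$ (so $t^{-2}\lesssim 2^{2k}|x_1|^{-2}$), arriving at the majorant $H_{k,T}(\rho)\sim 2^{3k}\min(1,\rho^{-2})$. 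You instead discard all oscillation in $\xi_2,\xi_3$ (trivial $L^1$ bounds of size $2^{2k}$), reduce everything to a one-dimensional estimate on $\sup_{|t|\le T}|I_1(\alpha,t)|$, and split the $t$-range at the threshold $|t|\sim\alpha 2^{-k}$ at which the critical point enters the support; this gives the weaker pointwise decay $(2^{k}/\alpha)^{1/2}$ in the intermediate range but compensates with rapid decay for $\alpha\gtrsim T2^{k}$, and the $\alpha$-integral still closes at $c(T)2^{k}$. Your diagnosis of the delicate point --- that taking $\sup_{|t|\le T}$ of the bare van der Corput bound $|t|^{-1/2}$ degenerates as $t\to0$, and that the rescue comes from the stationarity constraint tying $|t|$ to $\alpha 2^{-k}$ --- is exactly the mechanism the paper exploits, read off in the $t$-variable rather than the $x_1$-variable. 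Two harmless caveats: when $4T\lambda<\lambda^{-1}$ your middle range is empty and the first and third ranges overlap, where one should take the better of the two bounds; and your closing remark that the constant is of order $\sqrt T$ overlooks the $(T\lambda)^{-(N-1)}$ contribution of the far range, which grows as $T\to0$ --- both are immaterial since the lemma permits an arbitrary $c(T)$.
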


To prove this lemma we will employ the argument introduced in \cite{Fa}

\begin{proof}
Denote by $J(x,y,z,t)$ the integral on the left-hand side in \eqref{J}. We can rewrite $J(t, x, y, z)$ as:
\begin{equation*}
J( x, y, z,t)=\overset{3}{\underset{i=1}{\prod}} J_i(x_i, t),
\end{equation*}
where
 \begin{equation*}
 J_i(x_i,t)=\int e^{i\varphi_i(\xi_i)} \psi_i(\xi_i) d\xi_i \text{\hskip10pt and \hskip10pt}
 \varphi_i(\xi_i)=(-t\xi_i^2+x_i\xi_i), \;\;i=1,2,
 \end{equation*}
 and 
 $$J_3=\int e^{i(-t\xi_3+z\xi_3)} \psi_3(\xi_3) d\xi_3,$$
 we have $|J|\leq|J_1\|J_2\|J_3|.$ 

Next we consider the following three cases:
\begin{itemize}
\item For $|x_1|<1$ we use the support of $\psi_j,\ j=1,2,3,$ and get $|J|\leq c\,2^{3k}$.\\

\item  For $|x_1| \geq \max \{ 1, 2^3 2^k t\}$. In this case $|x_1|\geq 4|\xi_1|t$ for $\xi_1$ in the support of $\psi_1,$ and so $|\varphi_1^{'}(\xi_1)|\geq |x_1|/2$.
Using integration by parts twice we get:
\begin{equation*}
J_1=\int e^{i\varphi_1}\Big(\dfrac{1}{\varphi_1'}\Big(\dfrac{\psi_1}{\varphi_1'}\Big)'\Big)'\, d\xi_1.
\end{equation*}
Now by the support of  $\psi_1$ and the inequalities  $|\varphi_1^{'}(\xi_1)|\geq |x_1|/2 \text{ and }\  |x_1|^{-1}\leq1$ we have:
\begin{equation*}
|J_1|\leq c(T) \int_{\{|\xi_1|\leq 2^{k+1}\}} \dfrac{1}{|x_1|^2} \,d\xi_1 \leq c(T) 2^{k} |x_1|^{-2} .
\end{equation*}
Then $|J|\leq 2^ {3k}c(T) |x_1|^{-2}$,  using the supports of $\psi_2$ and $\psi_3$.\\

\item For $1\leq|x_1|\leq 2^3 2^k |t|$.  Observe that in this case $t\geq 2^{-k-3}>0$ and $t^{-2}\leq c |x_1|^{-2} 2^{2k}$.
Since $|\varphi_1^{''}(\xi_1)|=2t>0$,  Van der Corput lemma (see \cite{LP2} for instance) implies $|J_1|\leq ct^{-1/2}$. Similarly, we have $|J_2|\leq c t^{-1/2}$.
Thus  $|J| \leq c t^{-1}2^k \leq c T t^{-2} 2^k \leq c 2^{3k}|x_1|^{-2}$ by using the support of  $\psi_3$.
\end{itemize}

Finally we define
\begin{equation*}
 H_{k, T}(\rho)=\begin{cases}
 c2^{3k} \text{\hskip42pt for\hskip10pt}  0\leq \rho< 1,\\
c(T)2^{3k} \rho^{-2} \text{\hskip10pt for\hskip10pt} 1\leq \rho,
\end{cases}
\end{equation*}
and this function satisfies \eqref{H} and \eqref{J}.
\end{proof}

\begin{remark}
Observe that Lemma \ref{lemmaA} still works if we change $\psi_j$ 
by $\psi_j\psi(|\xi_j|-2^{k}+1)$, $j=1,2$ or $3$.
\end{remark}

\begin{proof}[Proof of Proposition \ref{propositionA}]
Using the same notation as in Lemma \ref{lemmaA}, i.e., $\psi_j=\psi(2^{k+1}-|\xi_j|),\ j=1,2,3$,
we define the sequence $\{\widetilde{\psi}_k\}$ as follows:
\begin{equation*}
\widetilde{\psi}_0(\xi_1, \xi_2, \xi_3)=\psi(2-|\xi_1|)\psi(2-|\xi_2|)\psi(2-|\xi_3|),
\end{equation*}
and for $k\geq 1$,
\begin{equation*}
\bar{\psi}_k(\xi_1, \xi_2, \xi_3) =\overset{3}{\underset{i=1}{\sum}}\psi_1\psi_2\psi_3\psi(|\xi_i|-2^{k}+1).
\end{equation*}
Notice  that $\sum_{k\geq 0}\widetilde{\psi}_k=1$.

Now we define the operator $\widehat{B_kf}(\xi)=\bar{\psi}_k^{1/2}(\xi) \hat{f}(\xi)$, $\xi \in \R^3$.

Then it is not difficult to verify that
\begin{align}
 \|B_kf\|_{L^2}&\leq c2^{-ks}\|f\|_{H^s},\label{B} \\
\widehat{B_k^2f}&=\bar{\psi}_k\hat{f},\label{B1}\\
\sum_{k\geq 0}\E(t)B^2_kE_0&=\E(t)E_0 ,\label{B2}
\end{align}
and
\begin{equation} \label{L}
|\int_{-T}^{T}(\E(t-\tau)(B_k^2 g(\cdot,\tau))(x, y, z) d\tau| \leq c(H_{k, T}(|\cdot|)\ast\int_{-T}^{T}\int \int|g(\tau, \cdot, y, z)| d\tau dy dz)(x),
\end{equation}
for $|t| \leq T$ and $g \in C_{0}^{\infty}(\re^4)$.

Since from this point on the argument to complete the proof of the proposition is well understood (see for instance
\cite{KPV1}) we will omit it. Thus the result follows.
\end{proof}

Now, following ideas from Kenig and Ziesler for the KPI equation (see\cite{KZ}), we show that \eqref{E2} does not hold for 
$s<1$.

\begin{proposition}\label{propositionB}
 For each $s< 1 $ there exists $F_0$ such that
$$ \|\E(t)F_0\|_{L_x^2L^\infty_{yzT}} \geq c(T,s) \|F_0\|_{H^s}.$$
\end{proposition}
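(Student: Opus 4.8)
The goal is to show that the maximal-function estimate
\eqref{E2} fails for $s<1$, so I want to exhibit, for each such $s$, a
datum $F_0$ whose linear evolution has $L^2_xL^\infty_{yzT}$ norm much
larger than $\|F_0\|_{H^s}$. The plan is to take $F_0$ concentrated at a
single high frequency, i.e.\ a sharp frequency cutoff
$\widehat{F_0}=\chi_{R}$ supported in a box of size $\sim R$ in each
variable around a point of size $R$ (or a smooth bump thereof). The
$H^s$ norm of such a datum grows like a fixed power of $R$ determined by
the frequency localization and the weight $(1+|\xi|^2)^{s/2}\sim R^s$,
while the key point is that $\E(t)F_0$ exhibits a \emph{focusing} at the
spatial origin at time $t=0$ (and along a drift line for $t>0$ coming
from the $\partial_z$ term) that makes its sup in $y,z,t$ essentially as
large as the full mass of the Fourier support.

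The key steps, in order, are as follows. First I would compute
$\E(t)F_0(x,y,z)$ explicitly as an oscillatory integral using the
symbol $e^{-it(\xi_1^2+\xi_2^2+\xi_3)}$ from \eqref{sol}; because there
is no dispersion in $\xi_3$, the $z$--integral produces a delta-like
concentration $z=t$, which is precisely the mechanism that lets one take
the supremum in $z$ (and in $t\in[0,T]$) for free without any decay cost.
Second, for the $x,y$ variables I would evaluate the sup over $y$ by
looking near $y=0$, where the quadratic phase in $\xi_2$ is stationary
and the integral is of order $R$ (the length of the $\xi_2$-support),
rather than exhibiting the $|x_1|^{-1/2}$ type decay that powers the
positive result. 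Third, I would estimate
$\|\E(t)F_0\|_{L^2_xL^\infty_{yzT}}$ from below by integrating in $x$
only over the set where this focusing persists; the remaining
$x_1$-integral of the resulting lower bound contributes its own power of
$R$. Finally, comparing the resulting power of $R$ on the left with $R^s$
on the right, one finds the left side wins precisely when $s<1$, giving
the claimed reversed inequality for a suitable constant.

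The main obstacle is getting an honest \emph{lower} bound on
$\|\E(t)F_0\|_{L^2_xL^\infty_{yzT}}$: upper bounds for oscillatory
integrals are routine, but here one must show the evolution really does
concentrate and does not suffer cancellation. Concretely, I expect the
hard part to be controlling the $x_1$-integral: I need a region of
positive measure in $x_1$ on which the one-dimensional oscillatory
integral $\int e^{i(-t\xi_1^2+x_1\xi_1)}\,d\xi_1$ over the
$\xi_1$-support stays bounded below in absolute value, so that its
square integrates to the right power of $R$. The cleanest way to force
this is a stationary-phase / Knapp-type argument: restrict $x_1$ to a
window of size $\sim R^{-1}$ (or $\sim (Rt)^{-1}$ along the appropriate
$t$-slice) around the stationary point, where the phase is nearly
constant and the integral is $\gtrsim R$ in modulus; one must check this
window has the right length so that, after squaring and integrating, the
total matches the frequency count, and that the supremum in $t\in[0,T]$
can be used to align the stationary point into the chosen $x_1$-window.
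Balancing these scales correctly is what pins down the threshold $s=1$,
and it is where the argument from \cite{KZ} for the KP-I equation must be
adapted to the degenerate (no $\xi_3$-dispersion) symbol here.
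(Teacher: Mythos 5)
Your proposal is correct and follows essentially the same route as the paper: a Knapp-type example with $\widehat{F_0}$ a bump at frequency scale $R=2^k$, a lower bound $|\E(t)F_0|\gtrsim R^3$ on the focusing window $|x|,|y|,|z|\lesssim \delta R^{-1}$, $t\simeq\delta R^{-2}$ where all phases are $O(\delta)$ and no cancellation occurs, and then the power count $R^{5/2}\le c\,R^{3/2+s}$ forcing $s\ge 1$. The paper implements your "hard part" (the honest lower bound on the oscillatory integral) exactly as you suggest, by Taylor-expanding the phase on the Knapp window rather than by genuine stationary-phase asymptotics.
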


\begin{proof}
 Suppose that \eqref{E2} is true and define $\hat{E_0}(\xi)=\hat{\theta}(\frac{\xi}{2^k})$, 
 where $k\in\N$ and $\hat{\theta}
 \in C^{\infty}_0$ is such that 
\[\hat{\theta}(\xi)=\left\{ \begin{array}{ccl} 1 & \text{on}& \{\xi \in \re^3; 1\leq|\xi|\leq2\},\\0 &\text{ on }& \{\xi \in \re^3;
|\xi|\leq1/2\}\cup \{\xi \in \re^3; |\xi|\geq4\}.\end{array}\right.\]
So by change of variables
\begin{align*}
 \|E_0\|_{H^s}=&(\int_{\{\frac{1}{2}\leq\frac{|\xi|}{2^k}\leq4\}}(1+|\xi|^2)^s
|\hat{\theta}(\frac{\xi}{2^k})|^2d\xi)^{\frac{1}{2}}=(\int_{\{\frac{1}{2}\leq|\xi|\leq4\}}(1+2^{2k}|\xi|^2)^s |\hat{\theta}(\xi)|^2 2^{3k}d\xi)^{\frac{1}{2}}\\
\leq& 2^{3k/2+ks}(\int_{\{\frac{1}{2}\leq|\xi|\leq4\}}(2^{-2k}+|\xi|^2)^s |\hat{\theta}(\xi)|^2 d\xi)^{\frac{1}{2}}
\leq  2^{3k/2+ks}c(s)(\int_{\{\frac{1}{2}\leq|\xi|\leq4\}} |\hat{\theta}(\xi)|^2 d\xi)^{\frac{1}{2}}\\
\leq & 2^{3k/2+ks}c(s).
\end{align*}
Next, we estimate $\E(t)E_0$. Again by changing variables we have
\begin{equation*}
\begin{split}
 (\E(t)E_0)(x, y, z)=&\int_{\{\frac{1}{2}\leq\frac{|\xi|}{2^k}\leq4\}} e^{i(x\xi_1+y\xi_2+z\xi_3+t(\xi_1^2+\xi_2^2+\xi_3))}
\hat{\theta}(\frac{\xi}{2^k}) d\xi\\
= &2^{3k}\int_{\{\frac{1}{2}\leq|\xi|\leq4\}} e^{ix\bar{\xi}}e^{is}
\hat{\theta}(\xi) d\xi,
\end{split}
\end{equation*}
where $\bar{\xi}=2^k\xi_1,\ s=y2^k\xi_2+z2^k\xi_3+t(2^{2k}\xi_1^2+2^{2k}\xi_2^2+2^k\xi_3).$

Now, by Taylor's expansion
\begin{equation*}
\begin{split}
|(\E(t)E_0)(x, y, z)|&=2^{3k}|\int_{\{\frac{1}{2}\leq|\xi|\leq4\}} e^{ix\bar{\xi}} e^{is}\hat{\theta}(\xi) d\xi|\\
 &\geq  2^{3k}|\int_{\{\frac{1}{2}\leq|\xi|\leq4\}}(\cos(x\bar{\xi})\cos(s)-\sin(x\bar{\xi})\sin(s))\hat{\theta}(\xi) d\xi|\\
&\geq  2^{3k}|\int_{\{\frac{1}{2}\leq|\xi|\leq4\}}[(1-\frac{(x\bar{\xi})^2}{2}+r(x\bar{\xi}))(1-\frac{s^2}{2}+r(s))+\\
&-((x\bar{\xi})^2-r_1(x\bar{\xi}))(s-r_1(s))]\hat{\theta}(\xi) d\xi|\\
&\geq 2^{3k}|\int_{\{\frac{1}{2}\leq|\xi|\leq4\}}[1-\eta(x, s, \bar{\xi})+\rho(x, s, \bar{\xi})]\hat{\theta}(\xi) d\xi|,
\end{split}
\end{equation*}
where 
$$\eta(x, s, \bar{\xi})=\frac{(x\bar{\xi})^2}{2}+\frac{s^2}{2}+\frac{s^2r(x\bar{\xi})}{2}+
\frac{{(x\bar{\xi})}^2r(s)}{2}+sx\bar{\xi}+r_1(x\bar{\xi})r_1(s),$$
$$\rho(x, s, \bar{\xi})=r(x\bar{\xi})+\frac{s^2(x\bar{\xi})^2}{2}+r(s)+r(x\bar{\xi})r(s)
+x\bar{\xi}r_1(s)+sr_1(x\bar{\xi}),$$
\begin{equation*}
r(\cdot)=(\cdot)^4-(\cdot)^6+(\cdot)^8-\ldots\text{\hskip10pt and \hskip10pt }
r_1(\cdot)=(\cdot)^3-(\cdot)^5+(\cdot)^7-\ldots.
\end{equation*}

If we choose $ 0<\delta\ll 1$ and take $|x|\leq \delta 2^{-k}, \ y,\ z\simeq \delta 2^{-k},\ t\simeq \delta 2^{-2k}$,\\ 
then $s, \ x\bar{\xi}\simeq O(\delta),\ 0<r(s),\ r_1(s),\ r(x\bar{\xi}),\ r_1(x\bar{\xi})\ll 1$ and $1-\eta(x, s, \bar{\xi})>c>0.$\\
So, 
 $$\|(\E(t)E_0)(x, y, z)|\geq c 2^{3k}|\int_{\{\frac{1}{2}\leq|\xi|\leq4\}}A \cdot \hat{\theta}(\xi) d\xi|\geq
c  2^{3k}|\int_{\{1\leq|\xi|\leq2\}} A \cdot 1 \ d\xi|\geq c 2^{3k}.$$
Then, 
$$ \|(\E(t)E_0)\|_{L_x^2L^\infty_{yzT}}\geq(\int_{|x|\leq \delta 2^{-k}}( \underset{\underset{y,z\simeq \delta 2 ^{-k}}{t\simeq \delta2^{-2k}}}{\sup}|E(t)E_0|)^2 dx)^{1/2}\geq 2^{3k}2^{-k/2}=2^{5k/2}.$$
Finally, we have
$$  c \ 2^{5k/2}\leq\|(\E(t)E_0)\|_{L_x^2L^\infty_{yzT}} \leq \|E_0\|_{H^s}\leq 2^{3k/2+ks} \ \forall k \in \N,$$
 which implies $s\geq 1$.
\end{proof}

Now we establish Strichartz estimates to the linear problem \eqref{eqlinear}. Before that we state and prove an essential lemma:

\begin{lemma}\label{lj}
 If $t\neq 0,\frac{1}{p}+\frac{1}{p'}=1$ and $p'\in [1,2],$ then the group $\E(t)$ defined in \eqref{sol} is a continuous linear
 operator from $L^{p'}_{xy}L^2_z(\re^3)$ to $L^p_{xy}L^2_z(\re^3)$ and
$$\|\E(t)f\|_{L^p_{xy}L^2_z}\leq \frac{c}{|t|^{(\frac{1}{p'}-\frac{1}{p})}}\|f\|_{L^{p'}_{xy}L^2_z}.$$
\end{lemma}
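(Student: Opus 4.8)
The plan is to reduce the estimate to the classical dispersive decay for the free two-dimensional Schr\"odinger group together with the trivial $L^2$ bound, and then to interpolate in the mixed-norm scale $L^p_{xy}L^2_z$. The starting observation is that the symbol in \eqref{sol} factors as $e^{-it(\xi_1^2+\xi_2^2+\xi_3)}=e^{-it(\xi_1^2+\xi_2^2)}\,e^{-it\xi_3}$, so that $\E(t)$ splits as the composition of the free Schr\"odinger propagator $e^{it\Delta_\perp}$ acting on the $(x,y)$ variables and the translation $f(x,y,z)\mapsto f(x,y,z-t)$ coming from the multiplier $e^{-it\xi_3}$ in the $z$ variable. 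These two operators commute, since they act on different variables, and the second one is an isometry of $L^2_z$.

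First I would prove the dispersive endpoint $\|\E(t)f\|_{L^\infty_{xy}L^2_z}\le (c/|t|)\,\|f\|_{L^1_{xy}L^2_z}$. Using the explicit convolution kernel of the two-dimensional Schr\"odinger group one writes, for $t\neq0$,
\[
\E(t)f(x,y,z)=\frac{1}{4\pi i t}\int_{\re^2}e^{i((x-x')^2+(y-y')^2)/(4t)}\,f(x',y',z-t)\,dx'\,dy',
\]
so that the kernel has modulus $c/|t|$ and the operator acts merely as a translation in $z$. Taking the $L^2_z$ norm and applying Minkowski's integral inequality (legitimate since the exponent $2\ge1$) moves the norm inside the $(x',y')$ integral; since $\|f(x',y',\cdot-t)\|_{L^2_z}=\|f(x',y',\cdot)\|_{L^2_z}$ by translation invariance, one gets $\|\E(t)f(x,y,\cdot)\|_{L^2_z}\le (c/|t|)\,\|f\|_{L^1_{xy}L^2_z}$ uniformly in $(x,y)$, which is the claimed endpoint.

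The second endpoint is $\|\E(t)f\|_{L^2_{xy}L^2_z}=\|f\|_{L^2_{xy}L^2_z}$, immediate from Plancherel since the symbol has modulus one; that is, $\E(t)$ is unitary on $L^2(\re^3)=L^2_{xy}L^2_z$. To conclude I would interpolate between these two bounds. Writing $1/p=\theta/2$ and $1/p'=1-\theta/2$ with $\theta=2/p\in[0,1]$ for $p'\in[1,2]$, interpolation in the Bochner spaces $L^{p'}_{xy}(L^2_z)\to L^p_{xy}(L^2_z)$ yields the operator-norm bound $(c/|t|)^{1-\theta}\cdot 1^{\theta}=c/|t|^{1-2/p}=c/|t|^{1/p'-1/p}$, which is precisely the asserted inequality.

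The only point requiring care, and what I expect to be the main (purely technical) obstacle, is the legitimacy of Riesz--Thorin interpolation in the mixed-norm, $L^2_z$-valued setting. This is standard once one regards $L^p_{xy}L^2_z$ as the $L^p$ space in $(x,y)$ with values in the fixed Hilbert space $L^2_z$ (Benedek--Panzone theory of mixed-norm $L^p$ spaces, equivalently complex interpolation of vector-valued $L^p$ spaces); note that the family $\E(t)$ is a single fixed operator, independent of the interpolation parameter, so no analytic-family (Stein) version is needed. All remaining steps are the routine endpoint computations sketched above.
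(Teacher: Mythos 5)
Your proof is correct and follows essentially the same route as the paper: both establish the $L^2_{xy}L^2_z$ isometry via Plancherel, obtain the dispersive endpoint $L^1_{xy}L^2_z\to L^\infty_{xy}L^2_z$ with decay $c/|t|$ by exhibiting the explicit two-dimensional Schr\"odinger kernel while using that the $e^{-it\xi_3}$ multiplier is an isometry of $L^2_z$ (you phrase it as a translation in $z$, the paper via Plancherel in $\xi_3$), apply Minkowski's integral inequality, and interpolate. Your explicit remark on the legitimacy of Riesz--Thorin in the $L^2_z$-valued mixed-norm setting addresses a point the paper leaves implicit.
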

\begin{proof}
 From Plancherel's theorem we have that
\begin{align}\label{q1}
 \|\E(t)f\|_{L^2_{xy}L^2_z}=\|\E(t)f\|_{L^2}=\|e^{-it(\xi_1^2+\xi_2^2+\xi_3)}\hat{f}\|_{L^2}=
\|\hat{f}\|_{L^2}=\|f\|_{L^2_{xy}L^2_z}.
\end{align}
Using Fourier's transform properties we obtain
\begin{align*}
 (\E(t)f)(x,y,z)&=(e^{-it(\xi_1^2+\xi_2^2+\xi_3)}\widehat{f}
 (\xi_1,\xi_2,\xi_3))^{\vee}(x,y,z)\\
&=\big(e^{-it\xi_3}(e^{-it(\xi_1^2+\xi_2^2)}\widehat{f}(\xi_1,\xi_2,\xi_3))^{{\vee}_{(x_1x_2)}}(x,y,\cdot)\big)^{{\vee}_{(x_3)}}
(\cdot,\cdot,z)\\
&=\big(e^{-it\xi_3}((e^{-it(\xi_1^2+\xi_2^2)})^{\vee{(x_1x_2)}}\ast_{x_1x_2}\widehat{f}^{(x_3)}(\xi_1,\xi_2,\xi_3))(x,y,\cdot)
\big)^{\vee_{(x_3)}}(\cdot,\cdot,z)\\
&=\big(e^{-it\xi_3}(\frac{e^{i(\xi_1^2+\xi_2^2)/4|t|}}{4 \pi t}\ast_{x_1x_2}\widehat{f}^{(x_3)}(\xi_1,\xi_2,\xi_3))(x,y,\cdot)
\big)^{\vee_{(x_3)}}(\cdot,\cdot,z)\\
&=\big(e^{-it\xi_3}g(x,y,\xi_3)\big)^{\vee_{(x_3)}}(\cdot,\cdot,z),
\end{align*}
where $$g(x,y,\cdot)=\big(\frac{e^{i(\xi_1^2+\xi_2^2)/4|t|}}{4 \pi t}
\ast_{x_1x_2}\widehat{f}^{(x_3)}(\xi_1,\xi_2,\xi_3)\big)(x,y,\cdot),$$ and $\ast_{x_1x_2}$ is the convolution in the first two variables, i.e.,
$$(f_1\ast_{x_1x_2}f_2)(x,y,z)=\int_{\re^2}f_1(x-x_1,y-x_2,z)f_2(x_1,x_2,z)dx_1dx_2.$$
By Plancherel's theorem and Minkowski's inequality we have
\begin{equation*}
\begin{split}
 \|\E(t)f(x,y,\cdot)\|_{L^2_z}&=\|g(x,y,\cdot)\|_{L^2_z} \\
&=\|\int\int\frac{e^{i((x-x_1)^2+(x-x_2)^2))/4|t|}}{4 \pi t}
\widehat{f}^{(x_3)}(x_1,x_2,\cdot)dx_1dx_2\|_{L^2_z}\\
&\leq \int\int \|\frac{e^{i((x-x_1)^2+(x-x_2)^2))/4|t|}}{4 \pi t}
\widehat{f}^{(x_3)}(x_1,x_2,\cdot)\|_{L^2_z}dx_1dx_2\\
&\leq \frac{1}{4 \pi |t|}\int\int \|\widehat{f}^{(x_3)}(x_1,x_2,\cdot)\|_{L^2_z}dx_1dx_2\\
&=\frac{1}{4 \pi |t|}\int\int \|{f}(x_1,x_2,\cdot)\|_{L^2_z}dx_1dx_2=\frac{1}{4 \pi |t|} \|{f}\|_{L^1_{xy}L^2_z}.
\end{split}
\end{equation*}
Therefore, from the last inequality we obtain
\begin{align}\label{q2}
 \|\E(t)f\|_{L^\infty_{xy}L^2_z}\leq \frac{1}{4 \pi |t|} \|
{f}\|_{L^1_{xy}L^2_z}.
\end{align}
Interpolation between inequalities \eqref{q1} and \eqref{q2} yields the result.
\end{proof}

Now we are able to prove Strichartz estimates. We notice that our result do not cover the endpoint $(p,q)=(\infty,2).$
\begin{proposition}[Strichartz estimates]\label{stric}
 The unitary group $\{\E(t)\}_{-\infty}^{+\infty}$ defined in  \eqref{sol} satisfies
\begin{align}\label{stric1}
 \|\E(t)f\|_{L^q_tL^p_{xy}L^2_z}\leq c\|f\|_{L^2_{xyz}},
\end{align}
\begin{align}\label{stric2}
 \|\int_{\re}\E(t-t')g(\cdot,t')dt'\|_{L^q_tL^p_{xy}L^2_z}\leq c\|g\|_{L^{q'}_tL^{p'}_{xy}L^2_z}
\end{align}
and
\begin{align}\label{stric3}
\|\int_{\re}\E(t)g(\cdot,t)dt\|_{L^2_{xyz}}\leq c\|g\|_{L^{q'}_tL^{p'}_{xy}L^2_z},
\end{align}
where $$\frac{1}{p}+\frac{1}{p'}=\frac{1}{q}+\frac{1}{q'}=1,\  \frac{2}{q}=1-\frac{2}{p}\text{ and } p=\frac{2}{\theta},\ 
\theta \in (0,1].$$
\end{proposition}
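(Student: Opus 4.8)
The plan is to run the classical $TT^*$ argument, taking as the only inputs the dispersive bound of Lemma~\ref{lj} and the $L^2$-conservation identity \eqref{q1}. The mixed-norm structure $L^p_{xy}L^2_z$ here plays exactly the role of a scalar $L^p$ space, because the non-dispersive $z$-variable enters only through the preserved $L^2_z$ factor, which has already been isolated inside Lemma~\ref{lj}. First I would record the dispersive exponent in the admissible form: since $\frac{1}{p'}-\frac1p=1-\frac2p=\frac2q$, Lemma~\ref{lj} reads
$$\|\E(t)f\|_{L^p_{xy}L^2_z}\le \frac{c}{|t|^{2/q}}\,\|f\|_{L^{p'}_{xy}L^2_z},\qquad 2\le p<\infty .$$

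The core step is to prove the retarded estimate \eqref{stric2}. Writing $X=L^q_tL^p_{xy}L^2_z$ and $X'=L^{q'}_tL^{p'}_{xy}L^2_z$, Minkowski's integral inequality together with the displayed decay gives, for each fixed $t$,
$$\left\|\int_\re\E(t-t')g(t')\,dt'\right\|_{L^p_{xy}L^2_z}\le c\int_\re\frac{\|g(t')\|_{L^{p'}_{xy}L^2_z}}{|t-t'|^{2/q}}\,dt' .$$
Taking the $L^q_t$ norm and applying the one-dimensional Hardy--Littlewood--Sobolev inequality to the scalar function $t'\mapsto\|g(t')\|_{L^{p'}_{xy}L^2_z}$, with $\lambda=2/q$, $a=q'$, $b=q$, produces exactly the right-hand side of \eqref{stric2}; the relation $1+\frac1q=\frac1{q'}+\frac2q$ holds automatically, and the hypothesis $0<2/q<1$ is equivalent to $q>2$, i.e. to $2<p<\infty$. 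The borderline case $p=2$, $q=\infty$ is instead immediate from Minkowski and \eqref{q1}.

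Finally I would deduce \eqref{stric1} and \eqref{stric3} abstractly, in the reflexive range $2<p<\infty$. Set $Tf=\E(\cdot)f:L^2_{xyz}\to X$; its adjoint is $T^*g=\int_\re\E(-t)g(t)\,dt:X'\to L^2_{xyz}$, and $TT^*g(t)=\int_\re\E(t-t')g(t')\,dt'$ is precisely the operator bounded in \eqref{stric2}, so $\|TT^*\|_{X'\to X}\le c$. Then $\|T^*g\|_{L^2}^2=\langle TT^*g,g\rangle\le\|TT^*g\|_X\|g\|_{X'}\le c\|g\|_{X'}^2$ yields $\|T^*\|_{X'\to L^2}\le c$, which is \eqref{stric3} (the sign in $\E(-t)$ is absorbed by replacing $g(t)$ with $g(-t)$, since the $X'$-norm is reflection invariant); dualizing gives $\|T\|_{L^2\to X}\le c$, i.e. \eqref{stric1}. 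The remaining case $p=2$, $q=\infty$ of \eqref{stric1} is nothing but the conservation law \eqref{q1}.

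I expect the only genuine obstacle to be the bookkeeping: checking that the admissibility relation $2/q=1-2/p$ simultaneously produces the decay rate $|t|^{-2/q}$ and the matching Hardy--Littlewood--Sobolev exponents, and verifying that the endpoint $q=2$ ($p=\infty$) must be excluded precisely because HLS degenerates at $\lambda=1$ — consistent with the remark preceding the statement that the endpoint $(p,q)=(\infty,2)$ is not covered.
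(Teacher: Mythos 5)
Your proposal is correct and follows essentially the same route as the paper: the paper's proof is exactly the sketch ``Lemma \ref{lj} plus Hardy--Littlewood--Sobolev gives \eqref{stric2}, and the three estimates are equivalent by the $TT^*$ (Stein--Tomas) argument,'' which you have carried out in detail with the right exponent bookkeeping ($\frac{1}{p'}-\frac1p=\frac2q$, HLS with $\lambda=2/q$) and the correct treatment of the endpoints. Nothing further is needed.
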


\begin{proof} To prove this proposition we use standard by now arguments. First one shows that the
three inequalities are equivalent. The main ingredient is the Stein-Thomas argument. Thus it
is enough to establish for instance the estimate \eqref{stric2}. To obtain \eqref{stric2}  we use Lemma \ref{lj} and
the Hardy-Littlewood-Sobolev theorem.
\end{proof}

Next we recall some  estimates proved in \cite{LiPoS} regarding the solutions of the linear problem
\begin{equation}\label{eqondalinear}
\begin{cases}
\partial_t^2 n + \Delta_\perp n  = 0, \;\;\;\;(x,y,z) \in \re^3,\;\; t>0,\\
n(\cdot, 0)=n_0(\cdot)\\
  \partial_t n(\cdot, 0)=n_1(\cdot),
\end{cases}
\end{equation}
where $\Delta_\perp=\partial^2_x + \partial^2_y$. The solution of the problem \eqref{eqondalinear} can be written as
\begin{equation}\label{n}
 n(\cdot, t)=N'(t)n_0+N(t)n_1,
\end{equation}
where $N(t)$ and $N'(t)$ were defined in  \eqref{defN} and \eqref{defN'}.
\begin{lemma}\label{l2}
 For $f \in L^2(\re^3)$ we have
\begin{align}
 \|N(t)f\|_{L^2(\re^3)}\leq|t| \|f\|_{L^2(\re^3)},\label{LE4}
\end{align}
\begin{align}
\|N'(t)f\|_{L^2(\re^3)}\leq \|f\|_{L^2(\re^3)},\label{LE5}
\end{align}
and 
\begin{align}
\|(-\Delta_\perp)^{1/2}N(t)f\|_{L^2(\re^3)}\leq \|f\|_{L^2(\re^3)}\label{LE6}.
\end{align}
\end{lemma}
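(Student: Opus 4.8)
The plan is to reduce all three estimates to pointwise bounds on Fourier multipliers and then invoke Plancherel's theorem. Writing $\rho=\rho(\xi)=(\xi_1^2+\xi_2^2)^{1/2}$ for the transverse frequency magnitude, the definitions \eqref{defN} and \eqref{defN'} show that the operators $N(t)$, $N'(t)$ and $(-\Delta_\perp)^{1/2}N(t)$ act on the Fourier side as multiplication by
$$\frac{\sin(\rho t)}{\rho},\qquad \cos(\rho t),\qquad \sin(\rho t),$$
respectively, the first being understood as its limiting value $t$ on the measure-zero set $\{\rho=0\}$. Since each of these multipliers depends only on the frequency variables, every one of the operators is bounded on $L^2(\re^3)$ with operator norm equal to the essential supremum of the modulus of its symbol; so the whole lemma comes down to three elementary pointwise estimates.

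First I would dispatch \eqref{LE5} and \eqref{LE6}, which are immediate from $|\cos(\rho t)|\le 1$ and $|\sin(\rho t)|\le 1$ for all $\rho,t$. Indeed, by Plancherel,
$$\|N'(t)f\|_{L^2}=\|\cos(\rho t)\,\widehat f\,\|_{L^2}\le \|\widehat f\,\|_{L^2}=\|f\|_{L^2},$$
and likewise $\|(-\Delta_\perp)^{1/2}N(t)f\|_{L^2}=\|\sin(\rho t)\,\widehat f\,\|_{L^2}\le \|f\|_{L^2}$. For \eqref{LE4} the only extra ingredient is the elementary inequality $|\sin a|\le |a|$, applied with $a=\rho t$: this gives $\bigl|\tfrac{\sin(\rho t)}{\rho}\bigr|\le |t|$ uniformly in $\rho\ge 0$, whence Plancherel yields $\|N(t)f\|_{L^2}=\bigl\|\tfrac{\sin(\rho t)}{\rho}\,\widehat f\,\bigr\|_{L^2}\le |t|\,\|f\|_{L^2}$.

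I do not expect any genuine obstacle here: the entire content is the three pointwise symbol bounds above, and the only point requiring mild care is checking that $\sin(\rho t)/\rho$ is a bona fide bounded multiplier near $\rho=0$ (it extends continuously, and in any case $\{\rho=0\}$ has measure zero in $\re^3$) together with recording $|\sin a|\le|a|$, which is exactly what turns the apparently singular factor $1/\rho$ into the clean linear-in-$t$ bound. The transverse structure $\Delta_\perp=\partial_x^2+\partial_y^2$ plays no special role, since the multipliers are functions of $\rho$ alone and the $\xi_3$-variable is simply a spectator throughout.
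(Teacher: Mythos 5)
Your proof is correct and is exactly the standard argument: the paper itself does not reprove this lemma (it is quoted from \cite{LiPoS}), and the proof there is precisely this reduction to the pointwise multiplier bounds $|\sin(\rho t)/\rho|\le|t|$, $|\cos(\rho t)|\le 1$, $|\sin(\rho t)|\le 1$ followed by Plancherel. Nothing is missing.
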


\begin{remark}\label{reml2}
From this lemma one can easily deduce that 
\begin{equation}\label{waveinH2}
\sum_{|\alpha|\leq2}\|N(t)\partial^{\alpha}f\|_{L^2(\R^3)} \leq c\|f\|_{H^1(\re^3)}+
c|t|\|\partial_zf\|_{H^1(\re^3)}.
\end{equation}
\end{remark}

\begin{lemma}\label{l1}
\begin{align}
 \|N'(t)n_0\|_{L^2_xL^{\infty}_{yzT}}\leq  \|n_0\|_{H^2(\re^3)},\label{LE7}
\end{align}

\begin{align}
\|(-\Delta_\perp)^{1/2}N(t)n_1\|_{L^2_{x}L^{\infty}_{yzT}}\leq T\|n_1\|_{H^2(\re^3)},\label{LE8}
\end{align}
and
\begin{align}
\|N(t)n_1\|_{L^2_{x}L^{\infty}_{yzT}}\leq T(\|n_1\|_{H^1(\re^3)}+\|\partial_z n_1\|_{H^1(\re^3)})\label{LE9}.
\end{align}
These estimates hold exchanging x and y. 
\end{lemma}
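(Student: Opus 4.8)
The plan is to read all three inequalities as maximal-function estimates for the transversal wave propagators and to reduce them, through an anisotropic Sobolev embedding in $(y,z,t)$, to the fixed-time $L^2$ bounds of Lemma \ref{l2}. Three structural facts drive everything. First, since $N(t)$ and $N'(t)$ are Fourier multipliers in $(\xi_1,\xi_2)$ alone (see \eqref{defN}, \eqref{defN'}), they commute with $\partial_y$, $\partial_z$ and with the Bessel potentials $J_y^s$, $J_z^s$; in particular $\partial_z N(t)=N(t)\partial_z$. Second, $\partial_tN(t)=N'(t)$ and $\partial_tN'(t)=\Delta_\perp N(t)$, so each time derivative of these propagators amounts to exactly one transversal derivative $(-\Delta_\perp)^{1/2}$. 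Third, on the Fourier side the symbols $\cos(|(\xi_1,\xi_2)|t)$ and $\sin(|(\xi_1,\xi_2)|t)$ are bounded by $1$, while $\sin(|(\xi_1,\xi_2)|t)/|(\xi_1,\xi_2)|\le|t|\le T$; the latter is the source of the factors $T$ in \eqref{LE8} and \eqref{LE9}.

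First I would record the anisotropic embedding
$$\|g\|_{L^\infty_{yzT}}\le c(T)\,\|J_y^{1/2+}J_z^{1/2+}J_t^{1/2+}g\|_{L^2_{yzt}},$$
obtained by applying the one-dimensional embedding $H^{1/2+}(\re)\hookrightarrow L^\infty$ successively in $y$, $z$ and $t\in[0,T]$ and interchanging the norms by Minkowski's inequality; taking $L^2_x$ and again using Minkowski reduces each left-hand side to an $L^2_{xyzt}$ norm of $J_y^{1/2+}J_z^{1/2+}J_t^{1/2+}$ applied to the relevant propagator acting on the data. The crucial point, which follows from the second structural fact above by interpolating between the $L^2_t$ and $H^1_t$ bounds, is that $J_t^{1/2+}$ contributes to these oscillatory symbols precisely the factor $(1+|(\xi_1,\xi_2)|^2)^{1/4+}$, i.e. one half of a transversal derivative. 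For \eqref{LE7} I apply this with $g=N'(t)n_0$: the base symbol $\cos$ is bounded by $1$, and $J_y^{1/2+}J_z^{1/2+}J_t^{1/2+}$ produces a multiplier dominated by $(1+\xi_2^2)^{1/4+}(1+\xi_3^2)^{1/4+}(1+|(\xi_1,\xi_2)|^2)^{1/4+}$, which is of order $\le|\xi|^{3/2+}$; hence the right-hand side is controlled by $\|n_0\|_{H^{3/2+}}\le\|n_0\|_{H^2}$. For \eqref{LE8} I apply it with $g=(-\Delta_\perp)^{1/2}N(t)n_1$, whose base symbol $\sin$ is likewise bounded by $1$; the same count gives $\|n_1\|_{H^{3/2+}}\le\|n_1\|_{H^2}$, the factor $T$ arising from the low-frequency regime where $\sin(|(\xi_1,\xi_2)|t)\approx|(\xi_1,\xi_2)|t$ is of size $\le T$.

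For \eqref{LE9} the cleanest route is the fundamental theorem of calculus. Since $N(0)=0$ and $\partial_\tau N(\tau)=N'(\tau)$,
$$N(t)n_1=\int_0^t N'(\tau)n_1\,d\tau,\qquad \sup_{0\le t\le T}|N(t)n_1|\le\int_0^T|N'(\tau)n_1|\,d\tau.$$
Moving $L^2_x$ inside the $\tau$-integral by Minkowski reduces matters to the fixed-time estimate $\|N'(\tau)n_1\|_{L^2_xL^\infty_{yz}}$, which by the two-dimensional embedding $\|h\|_{L^\infty_{yz}}\le c\|J_yJ_zh\|_{L^2_{yz}}$ together with the $L^2$-boundedness of $N'(\tau)$ from \eqref{LE5} is bounded by $\|J_yJ_zn_1\|_{L^2}\le\|n_1\|_{H^1}+\|\partial_z n_1\|_{H^1}$. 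Here the integer embedding is used deliberately: the mixed derivative $\partial_y\partial_z n_1$ it produces is exactly what forces the term $\|\partial_z n_1\|_{H^1}$, and integration of the uniformly bounded integrand over $[0,T]$ supplies the factor $T$. The estimates with $x$ and $y$ exchanged follow verbatim by symmetry, and $n\in C([0,T]:H^2)$ is then read off from \eqref{LE7}, \eqref{LE8} and Remark \ref{reml2}.

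The main obstacle is the temporal half-derivative in the second paragraph: one must justify rigorously and uniformly for $t\in[0,T]$ that $J_t^{1/2+}$ applied to the oscillatory propagators costs only $(1+|(\xi_1,\xi_2)|^2)^{1/4+}$, keeping the total symbol order below $2$, and one must keep track of the resulting powers of $T$ so that \eqref{LE7} stays (essentially) $T$-free while \eqref{LE8} is linear in $T$. If one prefers to avoid fractional time derivatives entirely, \eqref{LE7} and \eqref{LE8} can instead be reduced exactly as \eqref{LE9}, by the fundamental theorem of calculus in $t$ together with the fixed-time bounds \eqref{LE5}, \eqref{LE6} and Remark \ref{reml2}, at the modest cost of a slightly larger Sobolev exponent on the right-hand side.
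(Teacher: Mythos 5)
The paper itself offers no proof of this lemma: it is recalled verbatim from \cite{LiPoS}, so there is no in-text argument to compare yours against. Judged on its own terms, your proposal is essentially sound. The treatment of \eqref{LE9} is complete and correct: writing $N(t)n_1=\int_0^tN'(\tau)n_1\,d\tau$, pulling the supremum inside the $\tau$-integral, and using the two-dimensional embedding $\|h\|_{L^\infty_{yz}}\le c\|J_yJ_zh\|_{L^2_{yz}}$ together with \eqref{LE5} gives exactly the factor $T$ and, via $(1+\xi_2^2)^{1/2}(1+\xi_3^2)^{1/2}\le c(1+|\xi|^2)^{1/2}(1+|\xi_3|)$, exactly the combination $\|n_1\|_{H^1}+\|\partial_zn_1\|_{H^1}$. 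For \eqref{LE7} and \eqref{LE8} the derivative count $\tfrac12{+}+\tfrac12{+}+\tfrac12{+}=\tfrac32{+}<2$ is correct, and the claim that $J_t^{1/2+}$ costs only half a transversal derivative does follow by interpolating, at fixed frequency $\xi$, between the $L^2_t(0,T)$ and $H^1_t(0,T)$ bounds of $\cos(|(\xi_1,\xi_2)|t)\,$ (resp.\ $\sin$), then integrating in $\xi$; one should insert a smooth time cutoff (or use the intrinsic $H^{1/2+}((0,T))$ norm) to make $J_t^{1/2+}$ meaningful on a finite time interval, and perform the interpolation \emph{before} the $\xi$-integration, since doing it afterwards would produce $\|n_0\|_{H^{1+}}^{1/2-}\|n_0\|_{H^{2+}}^{1/2+}$, which is not controlled by $\|n_0\|_{H^2}$.

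Two points deserve correction. First, your advertised fallback for \eqref{LE7}--\eqref{LE8} -- the fundamental theorem of calculus in $t$ plus the fixed-time bounds -- is \emph{not} available at the stated regularity: since $\partial_tN'(t)=\Delta_\perp N(t)$ costs a full transversal derivative, that route requires $1+\tfrac12{+}+\tfrac12{+}=2{+}$ derivatives, i.e.\ $\|n_0\|_{H^{2+\epsilon}}$, which is strictly more than $H^2$; this is not a ``modest cost'' but a different (weaker) statement, so the fractional-time route must be carried out, not merely sketched. Second, your bookkeeping of the powers of $T$ does not reproduce the literal constants of the lemma: the $L^2_t(0,T)$ integration yields $c\,T^{1/2}$ in \eqref{LE8} rather than $T$ (the heuristic ``$\sin(|(\xi_1,\xi_2)|t)\approx|(\xi_1,\xi_2)|t$ at low frequency'' does not give a uniform factor $T$ at high frequencies), and \eqref{LE7} acquires a constant $c(T)$ rather than $1$. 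Neither discrepancy affects the use of the lemma in Lemmas \ref{lemmaEF} and \ref{lemmaEF1} or in the contraction argument, where only \emph{some} positive power of $T$ is needed, but you should state the estimates you actually prove. The closing remark about $n\in C([0,T]:H^2)$ is not part of this lemma and can be dropped.
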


In our argument we shall use some of the calculus inequalities involving fractional derivatives proved in \cite{KPV}.
More precisely, we recall the following estimate which is a particular case of those established in
(\cite{KPV}, Theorem A.8).
\begin{lemma}\label{fracder} Let $\rho\in(0,1)$, $\rho_1, \rho_2\in [0,\rho]$ with $\rho=\rho_1+\rho_2$. Furthermore, let $p_1, p_2, q_1, q_2\in [2,\infty)$
 such that
 $$
 \frac12=\frac{1}{p_1}+\frac{1}{p_2}=\frac{1}{q_1}+\frac{1}{q_2}.
 $$
 Then
\begin{equation}
  \|D^{\rho}_{x_j}(fg)-fD^{\rho}_{x_j}g-D^{\rho}_{x_j}f g\|_{L^2_{x_j}(\R;L^2(Q))}\leq c\,\|D^{\rho_1}_{x_j}f\|_{L^{p_1}_{x_j}(\R;L^{q_1}(Q))}\|D^{\rho_2}_{x_j}g\|_{L^{p_2}_{x_j}(\R;L^{q_2}(Q))},
\end{equation}
where $Q=\R^{n-1}\times[0,T]$.
\end{lemma}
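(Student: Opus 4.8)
The plan is to treat $D^\rho_{x_j}$ as a one-dimensional operator acting in the $x_j$ variable, with the remaining variables (those parametrizing $Q=\R^{n-1}\times[0,T]$) frozen; observe that the left-hand norm $L^2_{x_j}(\R;L^2(Q))$ is just the full $L^2$ norm. Since $\rho\in(0,1)$, I would start from the singular-integral representation
\[
D^\rho_{x_j}h(x_j)=c_\rho\int_\R\frac{h(x_j)-h(x_j-y)}{|y|^{1+\rho}}\,dy,
\]
valid pointwise in $x_j$ for each fixed value of the frozen variables. Substituting $h=fg$ and rearranging, a short computation in which the two single-difference terms cancel gives the exact commutator identity
\[
D^\rho_{x_j}(fg)-fD^\rho_{x_j}g-(D^\rho_{x_j}f)\,g
=-c_\rho\int_\R\frac{\big(f(x_j)-f(x_j-y)\big)\big(g(x_j)-g(x_j-y)\big)}{|y|^{1+\rho}}\,dy .
\]
This identity is the crux: it converts the commutator into a bilinear expression in which the singular weight can be shared between $f$ and $g$.

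Next I would split the weight as $|y|^{-(1+\rho)}=|y|^{-(1/2+\rho_1)}|y|^{-(1/2+\rho_2)}$, legitimate because $\rho=\rho_1+\rho_2$, and apply Cauchy--Schwarz in the $y$-integral. This yields the pointwise bound
\[
\big|D^\rho_{x_j}(fg)-fD^\rho_{x_j}g-(D^\rho_{x_j}f)g\big|(x_j)\le c_\rho\,F(x_j)\,G(x_j),
\]
where $F$ and $G$ are the Stein--Strichartz square functions $F=\big(\int_\R|f(x_j)-f(x_j-y)|^2|y|^{-(1+2\rho_1)}dy\big)^{1/2}$ and $G=\big(\int_\R|g(x_j)-g(x_j-y)|^2|y|^{-(1+2\rho_2)}dy\big)^{1/2}$, each taken at a fixed value of the frozen variables.

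Taking the full $L^2$ norm and applying the mixed-norm H\"older inequality with the exponent relations $\tfrac12=\tfrac1{p_1}+\tfrac1{p_2}=\tfrac1{q_1}+\tfrac1{q_2}$, I would bound the left-hand side by $c\,\|F\|_{L^{p_1}_{x_j}L^{q_1}(Q)}\,\|G\|_{L^{p_2}_{x_j}L^{q_2}(Q)}$. It then remains to identify each square-function norm with a fractional-derivative norm, i.e. to show $\|F\|_{L^{p_1}_{x_j}L^{q_1}(Q)}\le c\,\|D^{\rho_1}_{x_j}f\|_{L^{p_1}_{x_j}L^{q_1}(Q)}$ and the analogue for $G$. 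To pass from the scalar square function to the mixed norm I would invoke Minkowski's integral inequality inside $L^{q_1/2}(Q)$, available precisely because $q_1\ge2$, dominating $\|F(x_j,\cdot)\|_{L^{q_1}(Q)}$ by the $L^{q_1}(Q)$-valued square function of $x_j\mapsto f(x_j,\cdot)$.

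The main obstacle, and the step I would treat most carefully, is this last one: the square-function characterization of $D^{\rho_1}_{x_j}$ in the vector-valued, mixed-norm setting $L^{p_1}_{x_j}(L^{q_1}(Q))$. In the scalar case this is Stein's classical $g$-function identity for fractional derivatives, but here the inner space $L^{q_1}(Q)$ forces a Banach-space-valued Littlewood--Paley theory; the estimate holds because $L^{q_1}(Q)$ is a UMD space for $1<q_1<\infty$ together with $1<p_1<\infty$, which is exactly guaranteed by the hypotheses $p_1,q_1\in[2,\infty)$. Once this characterization is in hand (and likewise for $\rho_2,p_2,q_2$), the three displayed steps combine to give the asserted inequality with a constant $c$ depending only on $\rho,\rho_1,\rho_2$ and the exponents. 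I would note that the restriction to $p_i,q_i\ge2$, rather than the full admissible range, is what makes the Minkowski/UMD reduction clean and is consistent with the estimate being quoted as a particular case of \cite{KPV}.
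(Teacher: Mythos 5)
The paper offers no proof of this lemma at all: it is quoted verbatim as a particular case of Theorem A.8 in \cite{KPV}, so your argument has to stand on its own rather than be compared with an internal one. Most of it does stand. The first-difference representation of $D^{\rho}_{x_j}$ is valid for $\rho\in(0,1)$, the bilinear identity
\begin{equation*}
D^{\rho}_{x_j}(fg)-fD^{\rho}_{x_j}g-(D^{\rho}_{x_j}f)g=-c_{\rho}\int_{\R}\frac{\bigl(f(x_j)-f(x_j-y)\bigr)\bigl(g(x_j)-g(x_j-y)\bigr)}{|y|^{1+\rho}}\,dy
\end{equation*}
is an exact computation (the cross terms do cancel), and the mixed-norm H\"older step is routine. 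The vector-valued square-function comparison that you flag as the main obstacle is indeed where the real work lies, and your reduction via Minkowski (using $q_1\ge 2$) to an $L^{q_1}(Q)$-valued Marcinkiewicz functional, followed by a Banach-valued Littlewood--Paley characterization, is a legitimate route in the range $p_1,q_1\in[2,\infty)$; that step would need to be written out, but it is not where the proposal breaks.

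The genuine gap is the endpoint $\rho_1=0$ or $\rho_2=0$, which the statement explicitly allows ($\rho_1,\rho_2\in[0,\rho]$) and which is precisely the case the paper invokes later: in the proofs of Lemmas \ref{lemmaEF1} and \ref{lemmaH2} the estimate is applied with $\rho=\rho_1=1/2$ and $\rho_2=0$, producing the factor $\|\partial^{\beta_2}F\|_{L^4_{xyz}}$ with no derivative on the second function. In that case your splitting $|y|^{-(1+\rho)}=|y|^{-(1/2+\rho_1)}|y|^{-(1/2+\rho_2)}$ yields the functional $G(x_j)=\bigl(\int_{\R}|g(x_j)-g(x_j-y)|^2|y|^{-1}\,dy\bigr)^{1/2}$, which is not controlled by any $L^{p_2}_{x_j}L^{q_2}(Q)$ norm of $g$: for $g$ compactly supported and $g(x_j)\neq 0$ the tail $\int_{|y|\ge R}|g(x_j)-g(x_j-y)|^2|y|^{-1}\,dy$ behaves like $|g(x_j)|^2\int_{|y|\ge R}|y|^{-1}dy=\infty$, and there is no cancellation to rescue it. So the Cauchy--Schwarz argument proves the lemma only for $\rho_1,\rho_2\in(0,\rho)$; the endpoint requires a different mechanism (in \cite{KPV} it is reached through a Littlewood--Paley/paraproduct decomposition and the Coifman--Meyer multiplier theorem rather than the pointwise identity). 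You should either restrict your statement to $\rho_1,\rho_2>0$ --- which would not suffice for the way the lemma is used in this paper --- or supply the paraproduct argument for the endpoint case.
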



\section{Nonlinear Estimates}\label{nonlinearestimates}
In this section we will establish estimates for the nonlinear terms involving in our analysis

We begin by rewriting the integral equivalent form  of the IVP \eqref{eqzd1} as
\begin{equation}\label{eqint2}
 E(t)=\E(t)E_0 +\int_{0}^{t}\E(t-t')(EF)(t')dt'+\int_{0}^{t}\E(t-t')(EL)(t')dt',
\end{equation}
where
\begin{equation}\label{defF}
 F(t)=N'(t)n_0+N(t)n_1,
\end{equation}
and
\begin{equation}\label{defH}
 L(t)=\int_{0}^{t}N(t-t')\Delta_\perp(|E|^2)(t')dt'.
\end{equation}

In the next lemma we treat the nonlinearity $L$ in the Sobolev norm $\|\cdot\|_{H^2}$. 

\begin{lemma}\label{lemmaH1} Let $\alpha, \beta_1, \beta_2$ be multi-indices, then
\begin{equation}\label{NLE1}
\begin{split}
 \sum_{|\alpha|\leq 2}\|\partial^{\alpha}L\|_{L^2_{xyz}}
\leq & c\, T^{1/2}\|{E}\|_{L^{2}_xL^{\infty}_{yzT}} \underset{|\alpha|=2}
{ \sum}\|\partial_x\partial^{\alpha}E\|_{L^{\infty}_xL^2_{yzT}}
+c\, T^{1/2}\|{E}\|_{L^{2}_yL^{\infty}_{xzT}} \underset{|\alpha|=2}
{ \sum}\|\partial_y\partial^{\alpha}E\|_{L^{\infty}_yL^2_{xzT}}\\
&+cT^{1/2}\underset{|\beta_2|=1}{\underset{|\beta_1|+|\beta_2| \le 2}{\sum}}
\Big(\|\partial_x\partial^{\beta_1}E\|_{L^4_{xyT}L^{2}_{z}}+\|\partial_y\partial^{\beta_1}E\|_{L^4_{xyT}L^{2}_{z}}\Big)
\|J_z^{\frac{1}{2}+}\partial^{\beta_2}\bar{E}\|_{L^4_{xyT}L^{2}_{z}}\\
&+cT\|E\|^2_{L^{\infty}_TH^2(\R^3)}.
\end{split}
\end{equation}

\end{lemma}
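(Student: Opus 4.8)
The plan is to estimate each term in the definition \eqref{defH} of $L(t)$ by first moving the two derivatives $\Delta_\perp$ off the nonlinearity $|E|^2$ and onto the individual factors, and then controlling the resulting pieces in the mixed norms appearing on the right-hand side of \eqref{NLE1}. Writing $\partial^\alpha$ with $|\alpha|\le 2$ and using Minkowski's inequality together with estimate \eqref{LE6} from Lemma \ref{l2}, I would first bound
\begin{equation*}
\|\partial^{\alpha}L\|_{L^2_{xyz}}\le \int_0^t\|(-\Delta_\perp)^{1/2}N(t-t')\,(-\Delta_\perp)^{1/2}\partial^{\alpha}(|E|^2)(t')\|_{L^2_{xyz}}\,dt'
\le \int_0^t\|(-\Delta_\perp)^{1/2}\partial^{\alpha}(|E|^2)(t')\|_{L^2_{xyz}}\,dt',
\end{equation*}
so that the wave propagator is removed at the cost of distributing one half-derivative in each transverse direction. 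The point is that $\Delta_\perp=\partial_x^2+\partial_y^2$ supplies two transverse derivatives which, combined with the half-derivative from $(-\Delta_\perp)^{1/2}$, are to be split across the two factors $E$ and $\bar E$ by the Leibniz rule.

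Next I would apply the Leibniz rule to $\partial^{\alpha}(E\bar E)$ and group the resulting terms by how the derivatives land. The leading terms are those in which essentially all derivatives fall on one factor (say $\partial_x\partial^{\alpha}E$ with $|\alpha|=2$, giving three $x$-derivatives) while the other factor carries none; these are precisely the terms matched to the maximal-function norms $\|E\|_{L^2_xL^\infty_{yzT}}$ and the smoothing norms $\|\partial_x\partial^\alpha E\|_{L^\infty_xL^2_{yzT}}$ in the first line of \eqref{NLE1}. For these I would integrate in time using $\int_0^t(\cdots)\,dt'\le T^{1/2}\|\cdots\|_{L^2_T}$ (Cauchy--Schwarz in $t$), pair the $L^\infty_x$ smoothing factor with the $L^2_x$ maximal factor via Hölder in $x$, and then use Hölder in $(y,z)$ with the $L^\infty_{yz}/L^2_{yz}$ split; the symmetric $y$-derivative terms are handled identically, yielding the second term. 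The intermediate terms, where derivatives are shared more evenly between the factors, carry a fractional $z$-derivative $J_z^{1/2+}$ on one factor and are to be controlled by the Strichartz-type norms in the $L^4_{xyT}L^2_z$ scale; here the fractional Leibniz rule of Lemma \ref{fracder} is the natural tool to distribute a fractional derivative across a product, and Hölder in $(x,y,T)$ with conjugate exponents $4,4$ produces the $L^4_{xyT}L^2_z\times L^4_{xyT}L^2_z$ pairing in the third term.

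Finally, the genuinely low-order terms---those in which each factor carries at most one derivative, so that no smoothing or maximal gain is needed---are absorbed into the crude bound $cT\|E\|^2_{L^\infty_TH^2}$ by estimating the spatial $L^2_{xyz}$ norm of the product through Sobolev embedding $H^2(\re^3)\hookrightarrow L^\infty$ (placing one factor in $L^\infty$ and the other in $L^2$, or a Hölder splitting in $H^1\times H^1$), and integrating the resulting uniform-in-time bound over $[0,t]$ to gain the full factor of $T$. I expect the main obstacle to be the careful bookkeeping of the Leibniz expansion: one must verify that every term produced by expanding $(-\Delta_\perp)^{1/2}\partial^\alpha(E\bar E)$ falls into exactly one of these three regimes with the claimed exponents, and in particular that the fractional derivative $(-\Delta_\perp)^{1/2}$ can be harmlessly converted into the combination of transverse derivatives $\partial_x,\partial_y$ and the one-dimensional fractional operator $J_z^{1/2+}$ appearing on the right, which is where Lemma \ref{fracder} and the commutator structure must be invoked with some care to avoid losing regularity in the $z$-variable.
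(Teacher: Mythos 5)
Your proposal follows essentially the same route as the paper's proof: one factor of $(-\Delta_\perp)^{1/2}$ is absorbed by $N(t-t')$ via \eqref{LE6}, the remaining top-order derivatives are distributed by the Leibniz rule, the extreme terms are paired as $L^{\infty}_xL^2_{yzT}\times L^{2}_xL^{\infty}_{yzT}$ (with Cauchy--Schwarz in $t$ giving $T^{1/2}$), the intermediate terms as $L^4_{xyT}L^2_{z}\times L^4_{xyT}L^2_{z}$ after the one-dimensional Sobolev embedding $\|f\|_{L^{\infty}_z}\lesssim\|J_z^{1/2+}f\|_{L^2_z}$, and the low-order terms are absorbed into $cT\|E\|^2_{L^{\infty}_TH^2}$. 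The only point to streamline is that no fractional Leibniz rule (Lemma \ref{fracder}) or commutator argument is needed here: the leftover $(-\Delta_\perp)^{1/2}$ is dominated in $L^2$ by $\partial_x+\partial_y$ (since $|\xi_\perp|\le|\xi_1|+|\xi_2|$) \emph{before} the Leibniz expansion, and the operator $J_z^{1/2+}$ enters only through the Sobolev embedding in $z$, not from $(-\Delta_\perp)^{1/2}$.
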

\begin{proof}
Using the definition of $L$ in \eqref{defH} and the inequality \eqref{LE6} we have that
\begin{equation}\label{c0}
 \begin{split}
 \sum_{|\alpha|\leq 2}\|\partial^{\alpha}L\|_{L^2_{xyz}}
&\leq c\sum_{|\alpha|\leq2} \int_0^T\|(-\Delta_\perp)^{1/2}N(t'-s)(-\Delta_\perp)^{1/2}\partial^{\alpha}
(|E|^2)(s)\|_{L^2_{xyzT}}ds\\
&\leq c\sum_{|\alpha|\leq 2}\int_0^T\|(-\Delta_\perp)^{1/2}\partial^{\alpha}(|E|^2)(s)\|_{L^2_{xyz}}ds\\
\leq cT\|E\|^2_{L^{\infty}_TH^2(\R^3)}&+ cT^{1/2} \sum_{|\alpha|=2}\Big(\|\partial_x\partial^{\alpha}(E\bar{E})\|_{L^2_{xyzT}}+
\|\partial_y\partial^{\alpha}(E\bar{E})(s)\|_{L^2_{xyzT}}\Big).
\end{split}
\end{equation}

Next it will be enough to consider one of the terms inside the sum on the right hand side of \eqref{c0}.
By Leibniz' rule  we have
\begin{equation}\label{sum2}
\begin{split}
\|\partial_x\partial^\alpha(E\bar{E})\|_{L^2_{xyzT}} 
&\leq  c\underset{|\beta_1| +|\beta_2|= 2}{\sum}\|\partial_x(\partial^{\beta_1}E\partial^{\beta_2}\bar{E})\|_{L^2_{xyzT}}\\
&\leq c \underset{ |\beta_1| +|\beta_2|=2}{\sum} (\|\partial_x\partial^{\beta_1}E\partial^{\beta_2}\bar{E}\|_{L^2_{xyzT}}
+\|\partial^{\beta_1}E\partial_x\partial^{\beta_2}\bar{E}\|_{L^2_{xyzT}}\Big).
\end{split}
\end{equation}

Now we will consider just one of the terms on the right hand of the last inequality, the other one can be similarly treated.
To simplify the exposition we choose the terms $\|\partial_x^3E\bar{E}\|_{L^2_{xyzT}}$ and
$\|\partial_x^2E\partial_x\bar{E}\|_{L^2_{xyzT}}$ to show the next estimates since they have the same structure of the reminder terms
in the sum in \eqref{sum2}.

The H\"older inequality implies
\begin{equation}\label{term1}
\|\partial_x^3E\bar{E}\|_{L^2_{xyzT}}\le \|\partial_x^3E\|_{L^{\infty}_xL^2_{yzT}} \|E\|_{L^2_xL^{\infty}_{xyzT}}.
\end{equation}

On the other hand, using the H\"older inequality and the Sobolev lemma in the $z$-direction we obtain
\begin{equation}\label{term2}
\begin{split}
\|\partial_x^2E\partial_x\bar{E}\|_{L^2_{xyzT}}&\le \|\|\partial_x^2 E\|_{L^2_z}\|\partial_xE\|_{L^{\infty}_z}\|_{L^2_{xyT}}\\
&\le c\, \|\|\partial_x^2 E\|_{L^2_z}\|J_z^{\frac12+}\partial_xE\|_{L^2_z}\|_{L^2_{xyT}}\\
&\le c\|\partial_x^2 E\|_{L^4_{xyT}L^2_z}\|J_z^{\frac12+}\partial_xE\|_{L^4_{xyT}L^2_z}.
\end{split}
\end{equation}

Using the information in inequalities \eqref{term1} and \eqref{term2} in \eqref{sum2}  and then in \eqref{c0} the estimate
\eqref{NLE1} follows.

\end{proof}

\begin{lemma}\label{lemmaH3} Let $\alpha, \beta_1, \beta_2$ be multiindexes, then
\begin{equation}\label{NLE2}
\begin{split}
 \|L&\|_{L^2_xL^{\infty}_{yzT}}+\|L\|_{L^2_yL^{\infty}_{xzT}} \leq cT^2\|E\|^2_{L^{\infty}_TH^2(\R^3)} \\
& + c\, T^{3/2} \|{E}\|_{L^{2}_xL^{\infty}_{yzT}}
\underset{|\alpha|=2}{ \sum}\|\partial_x\partial^{\alpha}E\|_{L^{\infty}_xL^2_{yzT}}+
c\, T^{3/2} \|{E}\|_{L^{2}_yL^{\infty}_{xzT}}
\underset{|\alpha|=2}{ \sum}\|\partial_y\partial^{\alpha}E\|_{L^{\infty}_yL^2_{xzT}}
\\
&+cT^{3/2}\underset{|\beta_2|=1}{\underset{|\beta_1|+|\beta_2| \le 2}{\sum}}
\Big(\|\partial_x\partial^{\beta_1}E\|_{L^4_{xyT}L^{2}_{z}}+\|\partial_y\partial^{\beta_1}E\|_{L^4_{xyT}L^{2}_{z}}\Big)
\|J_z^{\frac{1}{2}+}\partial^{\beta_2}\bar{E}\|_{L^4_{xyT}L^{2}_{z}}.
\end{split}
\end{equation}
\end{lemma}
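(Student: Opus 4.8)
The plan is to reduce \eqref{NLE2} to the $L^2$-estimate already carried out in the proof of Lemma \ref{lemmaH1}, at the cost of one extra power of $T$ supplied by the maximal function bound \eqref{LE8} for the wave group. The key observation is that the right-hand side of \eqref{NLE2} is precisely the right-hand side of \eqref{NLE1} with every power of $T$ raised by one, so once the reduction is in place there is essentially nothing new to estimate.

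First I would use the definition \eqref{defH} of $L$ together with Minkowski's integral inequality to move the $L^2_x L^\infty_{yzT}$ norm inside the time integral. Taking absolute values, bounding $\int_0^t$ by $\int_0^T$, and noting that for fixed $t' \in [0,T]$ the argument $t-t'$ of $N$ ranges over a window of length at most $2T$, one obtains
\begin{equation*}
\|L\|_{L^2_x L^\infty_{yzT}} \le \int_0^T \|N(t-t')\Delta_\perp(|E|^2)(t')\|_{L^2_x L^\infty_{yzT}}\,dt'.
\end{equation*}
For each fixed $t'$ I would then factor $\Delta_\perp = (-\Delta_\perp)^{1/2}(-\Delta_\perp)^{1/2}$ and use that $(-\Delta_\perp)^{1/2}$ commutes with $N$ (both being Fourier multipliers) to rewrite the integrand as $(-\Delta_\perp)^{1/2}N(t-t')\big[(-\Delta_\perp)^{1/2}(|E|^2)(t')\big]$. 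Applying the wave-group maximal function estimate \eqref{LE8} of Lemma \ref{l1} with initial data $(-\Delta_\perp)^{1/2}(|E|^2)(t')$ produces a factor $T$ and reduces the integrand to an $H^2$ norm,
\begin{equation*}
\|N(t-t')\Delta_\perp(|E|^2)(t')\|_{L^2_x L^\infty_{yzT}} \le cT\,\|(-\Delta_\perp)^{1/2}(|E|^2)(t')\|_{H^2}.
\end{equation*}

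Since $(-\Delta_\perp)^{1/2}$ commutes with every $\partial^\alpha$ and $\|g\|_{H^2}\le c\sum_{|\alpha|\le 2}\|\partial^\alpha g\|_{L^2}$, the two displays above combine to give
\begin{equation*}
\|L\|_{L^2_x L^\infty_{yzT}} \le cT\int_0^T \sum_{|\alpha|\le 2}\|(-\Delta_\perp)^{1/2}\partial^\alpha(|E|^2)(t')\|_{L^2_{xyz}}\,dt'.
\end{equation*}
The integral on the right is exactly the quantity bounded in \eqref{c0} inside the proof of Lemma \ref{lemmaH1}; it was shown there, via \eqref{sum2}, \eqref{term1} and \eqref{term2}, to be dominated by $cT\|E\|^2_{L^\infty_T H^2} + cT^{1/2}(\cdots)$, where $(\cdots)$ stands for the smoothing and Strichartz products appearing on the right of \eqref{NLE1}. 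Multiplying by the extra factor $T$ converts $T\mapsto T^2$ and $T^{1/2}\mapsto T^{3/2}$, which is precisely \eqref{NLE2}.

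The term $\|L\|_{L^2_y L^\infty_{xzT}}$ is handled identically using the $x\leftrightarrow y$ version of \eqref{LE8}. The only genuinely delicate point is the very first step: justifying that the supremum over $t\in[0,T]$ of the $dt'$-integral is controlled by the $dt'$-integral of the maximal-function norm, and that the time window $t-t'\in[-T,T]$ keeps us within the range of validity of \eqref{LE8} with a constant of size $O(T)$. Everything after that reduction is a direct quotation of the computation already performed for Lemma \ref{lemmaH1}, so I would simply refer back to it rather than reproduce it.
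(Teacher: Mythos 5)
Your proof is correct and follows essentially the same route as the paper: reduce to the maximal function estimate \eqref{LE8} for the wave group (which supplies the extra factor of $T$), and then quote the $L^2$-computation from the proof of Lemma \ref{lemmaH1}. The only cosmetic difference is that the paper mistakenly cites Lemma \ref{l2} where \eqref{LE8} of Lemma \ref{l1} is actually being used, a slip you implicitly correct; also note that since $t'\le t$ in \eqref{defH} the argument $t-t'$ stays in $[0,T]$, so the time-window concern you raise at the end is moot.
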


\begin{proof}
Using Lemma \ref{l2} we have that
 \begin{equation*}
 \begin{split}
 \|L\|_{L^2_{x}L^{\infty}_{yzT}}&\leq \int_0^T\|(-\Delta_\perp)^{1/2}N(t-t')(-\Delta_\perp)^{1/2}(|E|^2)(t')
 \|_{L^2_{x}L^{\infty}_{yzT}}dt'\\
 &\leq T\int_0^T \|(-\Delta_\perp)^{1/2}(|E|^2)(t')\|_{H^2}dt'.
 \end{split}
 \end{equation*}
 Thus applying the argument used in Lemma \ref{lemmaH1} the result follows.
\end{proof}


\begin{lemma}\label{lemmaEF}
 \begin{equation}\label{NLE}
 \begin{split}
 \sum_{|\alpha|\leq 2} \|{\partial}^{\alpha}(EF)\|_{L^2_{xyzT}}\leq c T^{1/2}\,\|E\|_{L^{\infty}_TH^2} 
\big(\|n_0\|_{H^2}+\|n_1\|_{H^1}+T\|\partial_zn_1\|_{H^1}\big),
 \end{split}
 \end{equation}
and
 \begin{equation}\label{NLE0}
 \begin{split}
&\underset{|\alpha|\le 2}{\sum} \|{\partial}^{\alpha}(EL)\|_{L^2_{xyzT}}\leq 
cT^{3/2}\|E\|^3_{L^{\infty}_TH^2(\R^3)}\\
&+cT^{1/2}\,\underset{|\beta_2|=1}{\underset{|\beta_1|+|\beta_2| \le 2}{\sum}}
\Big(\|\partial_x\partial^{\beta_1}E\|_{L^4_{xyT}L^{2}_{z}}+\|\partial_y\partial^{\beta_1}
E\|_{L^4_{xyT}L^{2}_{z}}\Big)
\|J_z^{\frac{1}{2}+}\partial^{\beta_2}\bar{E}\|_{L^4_{xyT}L^{2}_{z}}\\
 & +cT\|E\|_{L^{\infty}_TH^2(\R^3)}\big(\|{E}\|_{L^{2}_xL^{\infty}_{yzT}} \underset{|\alpha|=2}{ \sum}
 \|\partial_x\partial^{\alpha}E\|_{L^{\infty}_xL^2_{yzT}}
+\|{E}\|_{L^{2}_yL^{\infty}_{xzT}} \underset{|\alpha|=2}{ \sum} \|\partial_y\partial^{\alpha} E\|_{L^{\infty}_yL^2_{xzT}}\big)
.
\end{split}
 \end{equation}
\end{lemma}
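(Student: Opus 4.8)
The plan is to derive both estimates from the Leibniz rule followed by H\"older's inequality in the mixed space-time norms, so that \eqref{NLE} reduces to the linear wave estimates of Lemma \ref{l2} and Remark \ref{reml2}, while \eqref{NLE0} reduces to the bounds on $L$ already established in Lemmas \ref{lemmaH1} and \ref{lemmaH3}.

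For \eqref{NLE} the key observation is that $H^2(\re^3)$ is a Banach algebra, since $2>3/2$. Differentiating the product and using the algebra property pointwise in $t$ gives
\[
\sum_{|\alpha|\le 2}\|\partial^\alpha(EF)(t)\|_{L^2_{xyz}}\le c\,\|E(t)\|_{H^2}\|F(t)\|_{H^2}.
\]
Taking the $L^2_T$ norm, pulling $E$ out in $L^\infty_T H^2$ and bounding the remaining time factor by $T^{1/2}\|F\|_{L^\infty_T H^2}$, it only remains to control $\|F\|_{L^\infty_T H^2}$ with $F=N'(t)n_0+N(t)n_1$. Because $N'(t)$ and $N(t)$ are Fourier multipliers they commute with $\partial^\alpha$, so \eqref{LE5} yields $\|N'(t)n_0\|_{H^2}\le c\|n_0\|_{H^2}$, while \eqref{waveinH2} of Remark \ref{reml2} gives $\|N(t)n_1\|_{H^2}\le c\|n_1\|_{H^1}+cT\|\partial_z n_1\|_{H^1}$ for $0\le t\le T$. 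Combining these produces exactly the right-hand side of \eqref{NLE}.

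For \eqref{NLE0} I would expand $\partial^\alpha(EL)=\sum_{\beta_1+\beta_2=\alpha}c_{\beta}\,\partial^{\beta_1}E\,\partial^{\beta_2}L$ and estimate each summand. When no derivative falls on $E$, and symmetrically when $L$ appears undifferentiated, I would place the undifferentiated factor in $L^\infty_{xyzT}$ via the Sobolev embedding $H^2(\re^3)\hookrightarrow L^\infty$, bounding it by $c\|E\|_{L^\infty_T H^2}$ respectively $c\|L\|_{L^\infty_T H^2}$, and keep the partner factor in $L^2_{xyzT}$, losing a factor $T^{1/2}$ from the time integration. Feeding Lemma \ref{lemmaH1} into $\sum_{|\alpha|\le 2}\|\partial^\alpha L\|$ then reproduces the cubic term $cT^{3/2}\|E\|^3_{L^\infty_T H^2}$ (from the $cT\|E\|^2$ contribution of Lemma \ref{lemmaH1}) together with the maximal-function bilinear term of \eqref{NLE0}, and contributes the Strichartz bilinear term, where the $z$-Sobolev trick $\|\cdot\|_{L^\infty_z}\le c\|J_z^{1/2+}\cdot\|_{L^2_z}$ already used inside Lemma \ref{lemmaH1} is what generates the $J_z^{1/2+}$ factors. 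For the remaining summands, in which one derivative falls on each factor, I would use the mixed-norm H\"older inequality $\|fg\|_{L^2_{xyzT}}\le\|f\|_{L^\infty_xL^2_{yzT}}\|g\|_{L^2_xL^\infty_{yzT}}$ together with its $x\leftrightarrow y$ analogue, pairing the smoothing norms of $E$ controlled by \eqref{ww2} against the maximal-function norm of $L$ supplied by Lemma \ref{lemmaH3}.

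The main obstacle is not any individual inequality but the bookkeeping: one must verify that every Leibniz summand can be routed into one of the three configurations above with the correct derivative count and the correct power of $T$, and in particular that the delicate shared-derivative pieces always admit a splitting in which the highest-order factor of $E$ lands in the smoothing norm $L^\infty_xL^2_{yzT}$ of \eqref{ww2}, never demanding more regularity than $H^2$ supplies, while its partner is absorbed by the maximal-function estimate of Lemma \ref{lemmaH3}. Once this case analysis is organized, the three groups of terms assemble into the right-hand side of \eqref{NLE0}.
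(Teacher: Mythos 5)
Your treatment of \eqref{NLE} coincides with the paper's: both rest on the algebra property of $H^2(\re^3)$, a factor $T^{1/2}$ from H\"older in time, and the bounds $\|N'(t)n_0\|_{H^2}\le c\|n_0\|_{H^2}$ together with \eqref{waveinH2} for $\|N(t)n_1\|_{H^2}$. That half is fine.

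For \eqref{NLE0}, however, the paper does not distribute derivatives between $E$ and $L$ at all: it applies the same algebra estimate wholesale,
\[
\sum_{|\alpha|\le 2}\|\partial^\alpha(EL)\|_{L^2_{xyzT}}\le c\,T^{1/2}\|E\|_{L^\infty_TH^2}\|L\|_{L^\infty_TH^2},
\]
and then substitutes the bound of Lemma \ref{lemmaH1} for $\|L\|_{L^\infty_TH^2}$; neither Lemma \ref{lemmaH3} nor the smoothing norms \eqref{ww2} enter at this point. Your alternative route has a genuine gap precisely in the ``one derivative on each factor'' case: the pairing $\|\partial^{\beta_1}E\|_{L^\infty_xL^2_{yzT}}\,\|\partial^{\beta_2}L\|_{L^2_xL^\infty_{yzT}}$ with $|\beta_1|=|\beta_2|=1$ involves two norms that none of your cited ingredients control. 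The smoothing norm \eqref{ww2} only bounds $\partial_x\partial^\alpha E$ with $|\alpha|=2$ (three derivatives, at least one in $x$), not a first derivative of $E$ in $L^\infty_xL^2_{yzT}$; and Lemma \ref{lemmaH3} bounds only the undifferentiated $\|L\|_{L^2_xL^\infty_{yzT}}$, not $\|\partial^{\beta_2}L\|_{L^2_xL^\infty_{yzT}}$. Moreover the right-hand side of \eqref{NLE0} contains no maximal-function norm of $L$, so an appeal to Lemma \ref{lemmaH3} would force a further expansion and still leave the derivative mismatch. The repair is the step you already used for \eqref{NLE}: for these cross terms put each factor in $L^4_{xyz}$ via $H^1(\re^3)\hookrightarrow L^4(\re^3)$, i.e.\ fall back on $\|EL\|_{H^2}\le c\|E\|_{H^2}\|L\|_{H^2}$ --- which collapses your whole case analysis into the paper's one-line argument. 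Your extreme cases ($E\,\partial^\alpha L$ and $\partial^\alpha E\,L$, with the undifferentiated factor in $L^\infty_{xyzT}$ by Sobolev embedding) are correct and, after inserting Lemma \ref{lemmaH1}, do reproduce the three groups of terms on the right of \eqref{NLE0}.
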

\begin{proof}
 To obtain the estimate \eqref{NLE} we first use properties of Sobolev spaces  to obtain
\begin{equation}
\underset{|\alpha|\le 2}{\sum}\|{\partial}^{\alpha}(EF)\|_{L^2_{xyzT}}\leq  
 c T^{1/2}\|E\|_{L^{\infty}_TH^2(\R^3)}\|F\|_{L^{\infty}_TH^2(\R^3)}.
\end{equation}
Then Lemma \ref{l2} and  \eqref{waveinH2}  yield the result.

 Analogously to obtain the estimate \eqref{NLE0} we first use properties of Sobolev spaces  to obtain
\begin{equation}
\underset{|\alpha|\le 2}{\sum}\|{\partial}^{\alpha}(EL)\|_{L^2_{xyzT}}\leq 
 c T^{1/2}\|E\|_{L^{\infty}_TH^2(\R^3)}\|L\|_{L^{\infty}_TH^2(\R^3)}.
\end{equation}
Then Lemma \ref{lemmaH1}  yields the result.
\end{proof}
                                    
\begin{lemma}\label{lemmaEF1}
\begin{equation}
\begin{split}
 \sum_{|\alpha|=2}\|{\partial}_x\int_{0}^{t} &\E(t-t'){\partial}^{\alpha}(EF)(t')dt'\|_{L^{\infty}_xL^{2}_{yzT}}\\
&\leq c\,T^{1/2}\|E\|_{L^\infty_TH^2}\big(\|n_0\|_{H^2}+T\|n_1\|_{H^1}+T\|\partial_zn_1\|_{H^1}\big)\\
&\;\;\;+ c\,T^{1/2} \|E\|_{L^2_xL^{\infty}_{yzT}}\big(\|n_0\|_{H^2}+\|n_1\|_{H^1}+T^{1/2}\|\partial_zn_1\|_{H^1}\big)\\
&\;\;\;+ \underset{|\beta_1|=1}{\sum} \big(T^{3/4}\|J^{1/4+}_zD^{1/2}_x\partial^{\beta_1}E\|_{L^4_{xyT}L^2_{z}}+T^{5/8}\|J_z^{3/8+}\partial^{\beta_1}E\|_{L^{8/3}_TL^8_{xy}L^2_{z}}\big)\\
&\;\;\;\times \big(\|n_0\|_{H^2}+T^{1/2}\|n_1\|_{H^1}+T\|\partial_zn_1\|_{H^1}\big).
\end{split}
\end{equation}
These estimates holds exchanging x and y.
\end{lemma}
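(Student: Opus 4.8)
The plan is to estimate the quantity
$$
\sum_{|\alpha|=2}\Big\|\partial_x\int_0^t\E(t-t')\partial^\alpha(EF)(t')\,dt'\Big\|_{L^\infty_xL^2_{yzT}}
$$
by splitting the forcing term $\partial^\alpha(EF)$ via Leibniz' rule and applying a different linear estimate to each resulting piece depending on how many derivatives land on $E$ versus $F$. The governing philosophy is that the smoothing/maximal-function machinery from Proposition \ref{p1} handles the genuinely top-order terms, while the Strichartz estimates of Proposition \ref{stric} absorb the intermediate terms (those giving rise to the $J_z^{1/4+}D_x^{1/2}$ and $J_z^{3/8+}$ norms), and the crudest $L^2$-bounds of Lemma \ref{l2} together with Remark \ref{reml2} control the terms where all derivatives fall on $F$. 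In every case the factor $F=N'(t)n_0+N(t)n_1$ must be converted into the data norms $\|n_0\|_{H^2}$, $\|n_1\|_{H^1}$, $\|\partial_z n_1\|_{H^1}$, which is exactly where the $T$- and $T^{1/2}$-weights in the statement come from.

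First I would expand $\partial^\alpha(EF)=\sum_{\beta_1+\beta_2=\alpha}c_{\beta_1\beta_2}\partial^{\beta_1}E\,\partial^{\beta_2}F$ and organize the sum by $|\beta_1|$. For the term where all derivatives hit $E$ (i.e. $|\beta_1|=2$, $\beta_2=0$), I would apply the inhomogeneous smoothing estimate \eqref{LE3}, writing $\partial_x\partial^\alpha(EF)=\partial_x(\partial^\alpha E\cdot F)$ and placing the result in $L^1_xL^2_{yzT}$ via H\"older; here $\|\partial^\alpha E\|_{L^2_{xyzT}}$ pairs with $\|F\|_{L^2_xL^\infty_{yzT}}$, and the latter is controlled by Lemma \ref{l1} in terms of $\|n_0\|_{H^2}$ and $\|n_1\|_{H^1}+\|\partial_z n_1\|_{H^1}$, producing the second line of the claimed bound with its $T^{1/2}$ weight. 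For the mixed terms with $|\beta_1|=1$, the plan is to use the Strichartz estimate \eqref{stric2}: one derivative on $E$ is kept together with the gain of $J_z^{1/4+}D_x^{1/2}$ or $J_z^{3/8+}$ smoothing coming from the group, while the factor $\partial^{\beta_2}F$ with $|\beta_2|=1$ is estimated in a dual Strichartz norm and then reduced to the data norms through Lemma \ref{l2} and \eqref{waveinH2}; this yields the third and fourth lines with weights $T^{3/4}$ and $T^{5/8}$.

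For the remaining term, where no derivative falls on $E$ ($|\beta_1|=0$, $|\beta_2|=2$), I would use the global $L^2$-based estimate: bound $\|\partial_x\int_0^t\E(t-t')(E\,\partial^\alpha F)\,dt'\|_{L^\infty_xL^2_{yzT}}$ again through \eqref{LE3}, pairing $\|E\|_{L^2_xL^\infty_{yzT}}$ (a norm controlled by the solution space \eqref{ww3}) with $\sum_{|\alpha|=2}\|\partial^\alpha F\|_{L^\infty_TL^2_{xyz}}$, and then invoking Lemma \ref{l2} together with Remark \ref{reml2} to express the latter as $\|n_0\|_{H^2}+\|n_1\|_{H^1}+T\|\partial_z n_1\|_{H^1}$ up to constants. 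This gives the first line of the bound. Throughout, the $x\leftrightarrow y$ symmetry noted in Propositions \ref{p1} and \ref{propositionA} lets me treat the $\partial_y$ version identically, so I only write out the $\partial_x$ case.

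The main obstacle I anticipate is bookkeeping rather than any single hard inequality: one must verify that each Leibniz term can be matched to a linear estimate whose hypotheses (the admissibility relation $2/q=1-2/p$ and the dual exponents in \eqref{stric2}) are actually met, and that the $J_z^{s+}$ Sobolev gains in $z$ are available precisely where \eqref{LE9} forces the appearance of $\partial_z n_1$. The delicate point is the intermediate $|\beta_1|=1$ terms, because there the choice of which Strichartz pair to use is what determines whether the smoothing norm comes out as $J_z^{1/4+}D_x^{1/2}$ in $L^4_{xyT}L^2_z$ or as $J_z^{3/8+}$ in $L^{8/3}_TL^8_{xy}L^2_z$; getting both simultaneously requires applying \eqref{stric2} twice with different admissible exponents and keeping careful track of the time weights $T^{3/4}$ and $T^{5/8}$. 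Once the correct pairing is fixed, each estimate reduces to H\"older in the transverse variables plus the already-established linear bounds, so no further genuinely new input is needed.
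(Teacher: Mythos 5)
Your decomposition by Leibniz' rule and your treatment of the two extreme terms are essentially the paper's: both $\partial^{\alpha}E\cdot F$ and $E\cdot\partial^{\alpha}F$ are fed into the inhomogeneous smoothing estimate \eqref{LE3}, placed in $L^1_xL^2_{yzT}$ by H\"older, and closed with Lemma \ref{l1}, Lemma \ref{l2} and Remark \ref{reml2}. (Minor bookkeeping slip: the term with $|\beta_1|=2$ is the one that produces the first line of the stated bound, since it pairs $\|\partial^{\alpha}E\|_{L^2_{xyzT}}$ with $\|F\|_{L^2_xL^{\infty}_{yzT}}$, while the $|\beta_1|=0$ term pairs $\|E\|_{L^2_xL^{\infty}_{yzT}}$ with $\|\partial^{\alpha}F\|_{L^2}$ and gives the second line; you have these swapped.)

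The genuine gap is in the mixed terms $|\beta_1|=|\beta_2|=1$. You propose to apply the Strichartz estimate \eqref{stric2} to the Duhamel integral and to read the $J^{1/4+}_zD^{1/2}_x$ and $J_z^{3/8+}$ gains as ``smoothing coming from the group.'' This cannot work: the left-hand side is an $L^{\infty}_xL^2_{yzT}$ norm with a full derivative $\partial_x$ outside the integral, whereas \eqref{stric2} only produces $L^q_tL^p_{xy}L^2_z$ norms with $p<\infty$ and recovers no derivatives at all; moreover the group has no dispersion in $z$, so no $J_z^{s}$ gain can come from it. In the paper these norms are not gains from the linear flow but norms of the unknown $E$ itself (components of the contraction norm, controlled separately in \eqref{norm3}). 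The correct mechanism is: write $\partial_x=D_x^{1/2}D_x^{1/2}$, use Minkowski's inequality and the homogeneous smoothing estimate \eqref{LE1} to reduce the Duhamel term to $\int_0^T\|D_x^{1/2}(\partial^{\beta_1}E\,\partial^{\beta_2}F)\|_{L^2_{xyz}}\,dt'$, then apply the fractional Leibniz rule of Lemma \ref{fracder}, H\"older, and Sobolev embedding in the $z$-variable (which is where the $J_z^{1/4+}$ and $J_z^{3/8+}$ weights on $E$ actually originate), with the factors of $F$ absorbed into $\|F\|_{L^{\infty}_TH^2}$; the weights $T^{3/4}$ and $T^{5/8}$ then come from H\"older in time, not from a choice of admissible Strichartz pairs. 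Without this step your argument for the mixed terms does not close.
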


\begin{proof} 
Let $\beta_i\in (\Z^{+})^3$, $i=1,2$, be multi-indices.  The Leibniz rule and Proposition \ref{p1} yield
 \begin{equation}\label{l4A}
 \begin{split}
\|{\partial}_x\int_{0}^{t} \E(t-t'){\partial}^{\alpha}(EF)&(t')dt'\|_{L^{\infty}_xL^{2}_{yzT}}\\
&\lesssim
\underset{|\alpha|=2}{\sum} \|{\partial}_x\int_{0}^{t}  \E(t-t')(\partial^{\alpha}E\,F+ E\,\partial^{\alpha}F)(t')dt'\|_{L^{\infty}_xL^{2}_{yzT}}\\
&\;\;+\underset{|\beta_1|=|\beta_2|=1}{\sum} \|{\partial}_x\int_{0}^{t}  \E(t-t')(\partial^{\beta_1}E\partial^{\beta_2}F)(t')dt'\|_{L^{\infty}_xL^{2}_{yzT}}\\
&\lesssim \underset{|\alpha|=2}{\sum}\big( \|\partial^{\alpha}E\,F\|_{L^1_xL^{2}_{yzT}}+ \|E\,\partial^{\alpha}F\|_{L^1_xL^{2}_{yzT}}\big)\\
&\;\;+\underset{|\beta_1|=|\beta_2|=1}{\sum} \int_{0}^{T}  \|D_x^{1/2}(\partial^{\beta_1}E\partial^{\beta_2}F)\|_{L^2_{xyz}}.
\end{split}
\end{equation}

Using the Holder inequality  Lemma \ref{l1} and Remark \ref{reml2} we deduce that
\begin{equation}\label{l4B}
\begin{split}
\underset{|\alpha|=2}{\sum}\big( \|\partial^{\alpha}E\,F\|_{L^1_xL^{2}_{yzT}}&+ \|E\,\partial^{\alpha}F\|_{L^1_xL^{2}_{yzT}}\big)\\
&\le  c\,T^{1/2}\|E\|_{L^\infty_TH^2}\big(\|n_0\|_{H^2}+T\|n_1\|_{H^1}+T\|\partial_zn_1\|_{H^1}\big)\\
&\;\;\;+ c\, \|E\|_{L^2_xL^{\infty}_{yzT}}\big(\|n_0\|_{H^2}+T^{1/2}\|n_1\|_{H^1(\re^3)}+T\|\partial_zn_1\|_{H^1(\re^3)}\big).
\end{split}
\end{equation}

On the other hand,  the use of the fractional Leibniz rule \eqref{fracder},  the Holder inequality and the Sobolev  embedding in several stages yield the
next chain of inequalities
\begin{equation}\label{l4C}
\begin{split}
&\underset{|\beta_1|=|\beta_2|=1}{\sum} \int_{0}^{T}  \|D_x^{1/2}(\partial^{\beta_1}E\partial^{\beta_2}F)\|_{L^2_{xyz}}\\
&\leq c\,\underset{|\beta_1|=|\beta_2|=1}{\sum} \int_{0}^{T}\big(\|D^{1/2}_x\partial^{\beta_1}E(t')\|_{L^4_{xyz}}\|\partial^{\beta_2}F(t')\|_{L^{4}_{xyz}}
+\|\partial^{\beta_1}E(t')D^{1/2}_x\partial^{\beta_2}F(t')\|_{L^2_{xyz}}\big)dt'\\
&\leq c\,\underset{|\beta_1|=1}{\sum} \big(T^{3/4}\|J^{1/4+}_zD^{1/2}_x\partial^{\beta_1}E\|_{L^4_{xyT}L^2_{z}}\|F\|_{L^\infty_{T}H^{2}}
+T^{5/8}\|J_z^{3/8+}\partial^{\beta_1}E\|_{L^{8/3}_TL^8_{xy}L^2_{z}}\|F\|_{L^\infty_{T}H^{2}}
\big).\\
&\leq c\,\underset{|\beta_1|=1}{\sum} \big(T^{3/4}\|J^{1/4+}_zD^{1/2}_x\partial^{\beta_1}E\|_{L^4_{xyT}L^2_{z}}+T^{5/8}\|J_z^{3/8+}\partial^{\beta_1}E\|_{L^{8/3}_TL^8_{xy}L^2_{z}}\big)\\
&\;\;\;\times \big(\|n_0\|_{H^2}+T^{1/2}\|n_1\|_{H^1(\re^3)}+T\|\partial_zn_1\|_{H^1(\re^3)}\big).
\end{split}
\end{equation}

Thus combining \eqref{l4B}, \eqref{l4C} and \eqref{l4A} the result follows.
\end{proof}

\begin{lemma}\label{lemmaH2}
\begin{equation*}
\begin{split}
 \sum_{|\alpha|=2}\|{\partial}_x\int_{0}^{t}
& \E(t-t'){\partial}^{\alpha}(EL)(t')dt'\|_{L^{\infty}_xL^{2}_{yzT}}\leq c\,T^{1/2}\|E\|_{L^\infty_TH^2}\|L\|_{L^2_xL^{\infty}_{yzT}} + cT^{1/2}\,\|E\|_{L^2_xL^{\infty}_{yzT}}\|L\|_{L^{\infty}_TH^2 }.\\
&\;\;+ c\,\underset{|\beta_1|=1}{\sum} \big(T^{3/4}\|J^{1/4+}_zD^{1/2}_x\partial^{\beta_1}E\|_{L^4_{xyT}L^2_{z}}
+T^{5/8}\|J_z^{3/8+}\partial^{\beta_1}E\|_{L^{8/3}_TL^8_{xy}L^2_{z}})\|L\|_{L^\infty_{T}H^{2}}.
\end{split}
\end{equation*}
The estimate holds exchanging x and y.
\end{lemma}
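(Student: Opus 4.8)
The plan is to follow the proof of Lemma \ref{lemmaEF1} essentially verbatim, replacing the factor $F$ by $L$ throughout and keeping the norms of $L$ abstract (they are controlled afterwards by Lemmas \ref{lemmaH1} and \ref{lemmaH3}). First I would apply the Leibniz rule to $\partial^\alpha(EL)$ with $|\alpha|=2$ and group the terms exactly as in \eqref{l4A}: the two extreme terms $\partial^\alpha E\,L$ and $E\,\partial^\alpha L$, in which both derivatives fall on one factor, and the mixed terms $\partial^{\beta_1}E\,\partial^{\beta_2}L$ with $|\beta_1|=|\beta_2|=1$.

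For the extreme terms I would invoke Proposition \ref{p1}, namely the smoothing estimate \eqref{LE3}, to bound their contribution to the left-hand side by $\sum_{|\alpha|=2}\big(\|\partial^\alpha E\,L\|_{L^1_xL^2_{yzT}}+\|E\,\partial^\alpha L\|_{L^1_xL^2_{yzT}}\big)$. H\"older's inequality (Cauchy--Schwarz in $x$, together with the $L^2_{yzT}$--$L^\infty_{yzT}$ splitting of the product) then gives
\[
\|\partial^\alpha E\,L\|_{L^1_xL^2_{yzT}}\le \|\partial^\alpha E\|_{L^2_{xyzT}}\|L\|_{L^2_xL^\infty_{yzT}}\le T^{1/2}\|E\|_{L^\infty_TH^2}\|L\|_{L^2_xL^\infty_{yzT}},
\]
and, symmetrically, $\|E\,\partial^\alpha L\|_{L^1_xL^2_{yzT}}\le T^{1/2}\|E\|_{L^2_xL^\infty_{yzT}}\|L\|_{L^\infty_TH^2}$. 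These account for the first two terms on the right-hand side of the claimed estimate.

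For the mixed terms I would write $\partial_x=c\,D_x^{1/2}D_x^{1/2}$ up to the (bounded) Hilbert transform in $x$, place one half-derivative outside the Duhamel integral, and combine Minkowski's integral inequality with the homogeneous smoothing estimate \eqref{LE1} to bound their contribution by $\sum_{|\beta_1|=|\beta_2|=1}\int_0^T\|D_x^{1/2}(\partial^{\beta_1}E\,\partial^{\beta_2}L)\|_{L^2_{xyz}}\,dt'$, exactly as in \eqref{l4A}. From here the fractional Leibniz rule (Lemma \ref{fracder}), H\"older's inequality and the Sobolev embedding in the $z$-direction, carried out as in the chain \eqref{l4C} with $F$ replaced by $L$, distribute the half-derivative onto the two factors and produce the factors $T^{3/4}\|J^{1/4+}_zD^{1/2}_x\partial^{\beta_1}E\|_{L^4_{xyT}L^2_z}$ and $T^{5/8}\|J_z^{3/8+}\partial^{\beta_1}E\|_{L^{8/3}_TL^8_{xy}L^2_z}$, each paired with $\|L\|_{L^\infty_TH^2}$. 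Collecting the three contributions yields the stated inequality, and the estimate with $x$ and $y$ interchanged follows from the symmetry of Proposition \ref{p1}.

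Since every step is a direct transcription of an argument already carried out for $F$, I do not expect a genuine obstacle. The only point requiring care is the bookkeeping of the mixed terms: one must split the full derivative $\partial_x$ into two half-derivatives so that Lemma \ref{fracder} applies and the resulting norms coincide with those appearing in $X_T(\cdot)$ from \eqref{strichartznorms}, and one must use the $z$-Sobolev embedding to absorb the lack of dispersion in the $z$-variable.
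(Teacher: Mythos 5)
Your proposal is correct and follows essentially the same route as the paper: Leibniz decomposition into the extreme terms $\partial^\alpha E\,L$, $E\,\partial^\alpha L$ and the mixed terms $\partial^{\beta_1}E\,\partial^{\beta_2}L$, the smoothing estimates of Proposition \ref{p1} to reduce to $L^1_xL^2_{yzT}$ and $\int_0^T\|D_x^{1/2}(\cdot)\|_{L^2}$ norms, then H\"older for the former and the fractional Leibniz rule with the $z$-Sobolev embedding for the latter. The paper's proof is literally the proof of Lemma \ref{lemmaEF1} with $F$ replaced by $L$ and the norms of $L$ left abstract, exactly as you describe.
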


\begin{proof} We follow the argument in the previous lemma. More precisely, 
Let $\beta_i\in (\Z^{+})^3$, $i=1,2$, be multi-indices.  The Leibniz rule and Proposition \ref{p1} yield
 \begin{equation}\label{l5A}
 \begin{split}
\|{\partial}_x\int_{0}^{t} \E(t-t'){\partial}^{\alpha}(EL)&(t')dt'\|_{L^{\infty}_xL^{2}_{yzT}}\\
&\lesssim
\underset{|\alpha|=2}{\sum} \|{\partial}_x\int_{0}^{t}  \E(t-t')(\partial^{\alpha}E\,L+ E\,\partial^{\alpha}L)(t')dt'\|_{L^{\infty}_xL^{2}_{yzT}}\\
&\;\;+\underset{|\beta_1|=|\beta_2|=1}{\sum} \|{\partial}_x\int_{0}^{t}  \E(t-t')(\partial^{\beta_1}E\partial^{\beta_2}L)(t')dt'\|_{L^{\infty}_xL^{2}_{yzT}}\\
&\lesssim \underset{|\alpha|=2}{\sum}\big( \|\partial^{\alpha}E\,L\|_{L^1_xL^{2}_{yzT}}+ \|E\,\partial^{\alpha}L\|_{L^1_xL^{2}_{yzT}}\big)\\
&\;\;+\underset{|\beta_1|=|\beta_2|=1}{\sum} \int_{0}^{T}  \|D_x^{1/2}(\partial^{\beta_1}E\partial^{\beta_2}L)\|_{L^2_{xyz}}
\end{split}
\end{equation}

Using the Holder inequality we deduce that
\begin{equation}\label{l5B}
\begin{split}
\underset{|\alpha|=2}{\sum}\big( \|\partial^{\alpha}E\,L\|_{L^1_xL^{2}_{yzT}}&+ \|E\,\partial^{\alpha}L\|_{L^1_xL^{2}_{yzT}}\big)\\
&\le  c\,T^{1/2}\|E\|_{L^\infty_TH^2}\|L\|_{L^2_xL^{\infty}_{yzT}} + cT^{1/2}\,\|E\|_{L^2_xL^{\infty}_{yzT}}\|L\|_{L^{\infty}_TH^2 }.
\end{split}
\end{equation}

On the other hand,  the use of the fractional Leibniz rule \eqref{fracder},  the Holder inequality and the Sobolev  embedding  yield
\begin{equation}\label{l5C}
\begin{split}
&\underset{|\beta_1|=|\beta_2|=1}{\sum} \int_{0}^{T}  \|D_x^{1/2}(\partial^{\beta_1}E\partial^{\beta_2}L)\|_{L^2_{xyz}}\\
&\leq c\,\underset{|\beta_1|=|\beta_2|=1}{\sum} \int_{0}^{T}\big(\|D^{1/2}_x\partial^{\beta_1}E(t')\|_{L^4_{xyz}}\|\partial^{\beta_2}L(t')\|_{L^{4}_{xyz}}
+\|\partial^{\beta_1}E(t')D^{1/2}_x\partial^{\beta_2}L(t')\|_{L^2_{xyz}}\big)dt'\\
&\leq c\,\underset{|\beta_1|=1}{\sum} \big(T^{3/4}\|J^{1/4+}_zD^{1/2}_x\partial^{\beta_1}E\|_{L^4_{xyT}L^2_{z}}
+T^{5/8}\|J_z^{3/8+}\partial^{\beta_1}E\|_{L^{8/3}_TL^8_{xy}L^2_{z}})\|L\|_{L^\infty_{T}H^{2}}.
\end{split}
\end{equation}

\end{proof}

\section{Proof of Theorem \ref{T2}}\label{proofoftheoremT2}

As we mention in the introduction we will use the contraction mapping principle.

We first define the metric space
\begin{equation*}\label{XaT}
X_{a, T}=\{E \in C([0, T]:\widetilde{H}^2(\re^3)):\vvvert E \vvvert \leq a \},
\end{equation*}
where
\begin{equation}
\begin{split}
\vvvert E \vvvert:=&\|E\|_{L^{\infty}_T H^2(\re^3)}+ \sum_{|\alpha|= 2}\big( \|D^{1/2}_x{\partial}^{\alpha}E \|_{L^{\infty}_TL^2_{xyz}}+ 
\|D^{1/2}_y{\partial}^{\alpha}E\|_{L^{\infty}_TL^2_{xyz}}\big) \\
&+  \underset{|\alpha|=1}{\sum} \big(\|J^{1/4+}_zD^{1/2}_x\partial^{\alpha}E\|_{L^4_{xyT}L^2_{z}}
+\|J_z^{3/8+}\partial^{\alpha}E\|_{L^{8/3}_TL^8_{xy}L^2_{z}}+\|J_z^{1/2+}\partial^{\alpha}E \|_{L^{4}_{xyT}L^2_{z}}\big)\\
&+ \|E \|_{L^{2}_xL^{\infty}_{yzT}} + \|E \|_{L^{2}_yL^{\infty}_{xzT}}\\
&+ \sum_{|\alpha|\leq1}\big(\|\partial_{x}\partial^\alpha E \|_{L^{4}_{xyT}L^2_{z}} + 
\|\partial_{y}\partial^\alpha E \|_{L^{4}_{xyT}L^2_{z}}\big)\nonumber\\
& + \sum_{|\alpha|= 2}\big( \|\partial_x{\partial}^{\alpha}E \|_{L^{\infty}_xL^2_{yzT}}+ 
\|\partial_y{\partial}^{\alpha}E\|_{L^{\infty}_yL^2_{xzT}}\big).
\end{split}
\end{equation}
and  the integral operator on $X_{a,T}$,
\begin{equation}\label{defpsi}
 \Psi(E)(t)=\E(t)E_0+\int_{0}^{t}\E(t-t')(EF)(t')dt'+\int_{0}^{t}\E(t-t')(EL)(t')dt',
\end{equation}
where
$F$ and $L$ were defined in \eqref{defF} and \eqref{defH}, respectively.

We will show that for appropriate $a$ and $T$ the operator $\Psi(\cdot)$ defines a contraction on $X_{a, T}$.

We begin by estimating the $H^2(\re^3)$-norm of $\Psi(E)$. Let $E \in X_{a, T}$. By Fubini's Theorem, Minkowski's inequality and group
 properties we have 
\begin{align}\label{z1}
 \|\Psi(E)(t)\|_{H^2} \leq \|E_0\|_{H^2} +\|E\|_{L^{\infty}_TH^2}\int_0^T\|F(t')\|_{H^2}dt'
 +\|E\|_{L^{\infty}_TH^2}\int_0^T\|L(t')\|_{H^2}dt'.
\end{align}
From  Lemma \ref{l2}, Lemma \ref{lemmaH1} and inequality \eqref{waveinH2} we have
\begin{equation}\label{ineq}
\begin{split}
\|\Psi(E)(t)\|_{H^2} \leq &\|E_0 \|_{H^2}+c T\|E\|_{L^{\infty}_TH^2}
\big(\|n_0\|_{H^2}+\|n_1\|_{H^1(\re^3)}+T\|\partial_zn_1\|_{H^1(\re^3)}\big) \\
&+cT^{3/2}\|E\|^3_{L^{\infty}_TH^2(\R^3)}+c T^{3/2}\|E\|_{L^{\infty}_TH^2} \|{E}\|_{L^{2}_xL^{\infty}_{yzT}} \underset{|\alpha|=2}
{ \sum}\|\partial_x\partial^{\alpha}E\|_{L^{\infty}_xL^2_{yzT}}\\
&+c\, T^{3/2}\|E\|_{L^{\infty}_TH^2}\|{E}\|_{L^{2}_yL^{\infty}_{xzT}} \underset{|\alpha|=2}
{ \sum}\|\partial_y\partial^{\alpha}E\|_{L^{\infty}_yL^2_{xzT}}\\
+cT^{3/2}\|E\|_{L^{\infty}_TH^2}&\underset{|\beta_2|=1}{\underset{|\beta_1|+|\beta_2| \le 2}{\sum}}
\Big(\|\partial_x\partial^{\beta_1}E\|_{L^4_{xyT}L^{2}_{z}}+\|\partial_y\partial^{\beta_1}E\|_{L^4_{xyT}L^{2}_{z}}\Big)
\|J_z^{\frac{1}{2}+}\partial^{\beta_2}\bar{E}\|_{L^4_{xyT}L^{2}_{z}}.
\end{split}
\end{equation}

Therefore
\begin{equation}\label{norm1}
\begin{split}
\|\Psi(E)(t)\|_{L^{\infty}_TH^2} \leq &\|E_0 \|_{H^2}+c T \vvvert E  \vvvert
\big(\|n_0\|_{H^2}+\|n_1\|_{H^1(\re^3)}+T\|\partial_zn_1\|_{H^1(\re^3)}\big) \\
&+cT^{3/2} \vvvert E  \vvvert^3.
\end{split}
\end{equation}

Next, we estimate  the norms 
$$ \|\cdot\|_{L^{2}_xL^{\infty}_{yzT}},\ 
\sum_{|\alpha|\leq 2}\|\partial_x\partial^{\alpha}\cdot\|_{L^{\infty}_xL^{2}_{yzT}},\ 
\|\cdot\|_{L^{2}_yL^{\infty}_{xzT}},\ 
\sum_{|\alpha|\leq 2}\|\partial_y\partial^{\alpha}\cdot\|_{L^{\infty}_yL^{2}_{xzT}}.$$

By symmetry is enough to estimate the first two norms. Thus,  using the definition of $\Psi$ in \eqref{defpsi}, Proposition \ref{propositionA}  and the inequalities \eqref{z1} and \eqref{ineq}
 it follows that
\begin{equation}\label{norm2}
\begin{split}
\|\Psi (E) \|_{L^{2}_xL^{\infty}_{yzT}} \leq &\|E_0 \|_{H^2}+c T \vvvert E  \vvvert
\big(\|n_0\|_{H^2}+\|n_1\|_{H^1(\re^3)}+T\|\partial_zn_1\|_{H^1(\re^3)}\big) \\
&+cT^{3/2} \vvvert E  \vvvert^3.
\end{split}
\end{equation}

Next we use Proposition  \ref{stric} and then the inequalities \eqref{z1} and \eqref{ineq} to obtain
\begin{equation}\label{norm3}
\begin{split}
&\underset{|\alpha|=1}{\sum} \|J^{1/4+}_zD^{1/2}_x\partial^{\alpha}\Psi(E)\|_{L^4_{xyT}L^2_{z}}
+\|J_z^{3/8+}\partial^{\alpha}\Psi(E)\|_{L^{8/3}_TL^8_{xy}L^2_{z}}+\|J_z^{1/2+}\partial^{\alpha}\Psi(E) \|_{L^{4}_{xyT}L^2_{z}}\\
&+ \sum_{|\alpha|\leq1}(\|\partial_{x}\partial^\alpha \Psi(E) \|_{L^{4}_{xyT}L^2_{z}} + 
\|\partial_{y}\partial^\alpha\Psi(E) \|_{L^{4}_{xyT}L^2_{z}})\\
&\le c\big( \|E_0\|_{H^2} +\|E\|_{L^{\infty}_TH^2}\int_0^T\|F(t')\|_{H^2}dt' +\|E\|_{L^{\infty}_TH^2}\int_0^T\|L(t')\|_{H^2}dt'\big).\\
&\le \|E_0 \|_{H^2}+c T \vvvert E  \vvvert
\big(\|n_0\|_{H^2}+\|n_1\|_{H^1(\re^3)}+T\|\partial_zn_1\|_{H^1(\re^3)}\big)+cT^{3/2} \vvvert E  \vvvert^3.
\end{split}
\end{equation}

Now using the definition of $\Psi$ in \eqref{defpsi},  Proposition \ref{p1}
and Lemmas \ref{lemmaEF1} and \ref{lemmaH2} we obtain

\begin{equation}\label{norm4}
\begin{split}
\sum_{|\alpha|= 2}\|{\partial}_x{\partial}^{\alpha}\Psi (E) \|_{L^{\infty}_xL^{2}_{yzT}}
 &\leq  c\,\sum_{|\alpha|= 2}\|D^{1/2}_x{\partial}^{\alpha}E_0\|_{L^{2}} + c (T)\,T\, \vvvert E  \vvvert^3\\
&\;\; +c(T)\,T^{1/2}\,\vvvert E  \vvvert\,\big(\|n_0\|_{H^2}+\|n_1\|_{H^1}+T\|\partial_zn_1\|
_{H^1}\big)\\
\end{split}
\end{equation}

It remains to estimate the norms $\sum\limits_{|\alpha|= 2}\|D^{1/2}_x\partial^{\alpha}\cdot
\|_{L^{\infty}_TL^2_{xyz}}$
and $\sum\limits_{|\alpha|= 2}\|D^{1/2}_y\partial^{\alpha}\cdot\|_{L^{\infty}_TL^2_{xyz}}$.
Once again by symmetry we only estimate  the first one.

Now using the definition of $\Psi$ in \eqref{defpsi},  Proposition \ref{p1}
and Lemmas \ref{lemmaEF1} and \ref{lemmaH2} we get

\begin{equation}\label{norm5}
\begin{split}
\sum\limits_{|\alpha|= 2}\|D^{1/2}_x\partial^{\alpha}\Psi(E)\|_{L^{\infty}_TL^2_{xyz}}\leq &
\sum_{|\alpha|= 2}\|D^{1/2}_x{\partial}^{\alpha}E_0\|_{L^{2}}+ c (T)\,T\, \vvvert E  \vvvert^3\\
&+  c(T)\,T^{1/2}\,\vvvert E  \vvvert\,\big(\|n_0\|_{H^2}+\|n_1\|_{H^1}+T\|\partial_zn_1\|
_{H^1}\big).
\end{split}
\end{equation}

Hence, a suitable choice of $a=a(\|E_0\|_{\tilde{H}^3},T)$ and $T$ ( $T$ sufficiently small depending on
 $\|n_0\|_{\widetilde{H}^3}$,
$\|n_1\|_{\widetilde{H}^2}$ and $\|\partial_zn_1\|_{\widetilde{H}^2}$), we see that $\Psi$ maps $X_{a,T}$
into $X_{a,T}$.

Since the reminder of the proof follows a similar argument we will omit it.

\subsection*{Acknowledgments} The authors would like to thank M. Panthee for given several suggestions to improve the presentation
 of this manuscript. F. L. was partially supported by CNPq and FAPERJ/ Brazil.

\vskip5mm

\end{document}